\title{Bernoulli Randomness and Biased Normality}
\date{July 2020\footnote{Last updated on \today}}
\author{Andrew DeLapo\thanks{This work was the author's senior honors thesis which was completed in the Department of Mathematics at the University of California, Berkeley, supervised by Professor Theodore Slaman.}\\\texttt{adelapo@berkeley.edu}}
\theoremstyle{definition}
\newtheorem{theorem}{Theorem}[section]
\newtheorem{lemma}[theorem]{Lemma}
\newtheorem{corollary}[theorem]{Corollary}
\newtheorem{definition}[theorem]{Definition}
\newtheorem{example}[theorem]{Example}
\newtheorem{construction}[theorem]{Construction}
\newtheorem*{conjecture}{Conjecture}
\numberwithin{equation}{section}
\definecolor{purpl}{RGB}{131,0,175}
\newcommand{\N}{\mathbb{N}}
\newcommand{\Z}{\mathbb{Z}}
\newcommand{\R}{\mathbb{R}}
\renewcommand{\bar}{\overline}
\newcommand{\abs}[1]{\left| #1 \right|}
\newcommand{\card}[1]{\left| #1 \right|}
\newcommand{\dbracket}[1]{\left[\!\left[ #1 \right]\!\right]}
\newcommand{\floor}[1]{\left\lfloor #1 \right\rfloor}
\newcommand{\ceil}[1]{\left\lceil #1 \right\rceil}
\DeclareMathOperator{\occ}{occ}
\DeclareMathOperator{\len}{len}
\begin{document}

\maketitle

\begin{abstract}
One can consider $\mu$-Martin-L\"of randomness for a probability measure $\mu$ on $2^{\omega}$, such as the Bernoulli measure $\mu_p$ given $p \in (0, 1)$. We study Bernoulli randomness of sequences in $n^{\omega}$ with parameters $p_0, p_1, \dotsc, p_{n-1}$, and we introduce a biased version of normality. We prove that every Bernoulli random real is normal in the biased sense, and this has the corollary that the set of biased normal reals has full Bernoulli measure in $n^{\omega}$. We give an algorithm for computing biased normal sequences from normal sequences, so that we can give explicit examples of biased normal reals. We investigate an application of randomness to iterated function systems. Finally, we list a few further questions relating to Bernoulli randomness and biased normality.
\end{abstract}

\section{Background}
Mirroring the historical development of normal numbers and algorithmic randomness, this paper introduces some generalizations of known connections between normality and randomness. Borel \cite{borel} first described normal numbers in 1909, and Pillai \cite{pillai} shortened Borel's definition in 1940. One decade later, Niven and Zuckerman \cite{nivenzuckerman} proved an equivalent formulation of normality in terms of blocks of digits. Although Borel also showed in 1909 that almost all real numbers are normal in every base, where the measure is the Lebesgue measure, the first explicit construction of a normal number did not appear until 1933, by Champernowne \cite{champernowne}. In 1966, Martin-L\"of \cite{martinlof} defined randomness criteria in terms of geometrically shrinking and uniformly computably enumerable open sets, and it can be shown that, to the Lebesgue measure, all Martin-L\"of-random numbers are normal in every base.

After introducing preliminary notation, definitions, and theorems in the remainder of this section, we begin in Section \ref{sec:generalizations} with a description of normality with respect to given biases on each digit in the base. This definition is written to follow Borel's original definition of normality. We then prove a redundancy in our definition, as Pillai showed in Borel's definition. We follow this with a characterization of biased normality in terms of blocks, as Niven and Zuckerman proved. Using the terms and definitions that will be introduced later in this paper, the equivalences allow us to prove that, fixing $b$ biases $\bar{p} = (p_0, p_1, \dotsc, p_{b-1})$ adding up to $1$ and using the Bernoulli measure $\mu_{\bar{p}}$ on $b^{\omega}$, all $\mu_{\bar{p}}$-Martin-L\"of-random numbers are biased normal with respect to $\bar{p}$. In Section \ref{sec:construction}, we give an algorithm which, given rational biases, uses a normal number to construct a biased normal number with respect to the biases. Section \ref{sec:applications} describes an application of biased normal numbers to iterated function systems, and Section \ref{sec:questions} lists questions for further research.

\subsection{Notation}
A \textit{base} is an integer $n \geq 2$. Let $n^{\omega}$ denote the set of infinite $n$-ary sequences where $n$ is a base. We identify $n^{< \omega}$ as the set of finite $n$-ary sequences, which we also call \textit{blocks}. For a given $\ell \in \N$, let $n^{\ell}$ be the set of $n$-ary sequences of length $\ell$. If $\sigma \in n^{< \omega}$, then let $\dbracket{\sigma} \subseteq n^{\omega}$ be the set of infinite sequences which extend $\sigma$.

If $\sigma$ is a (finite or infinite) $n$-ary sequence, we will index the entries in $\sigma$ by $\sigma[i]$, where $\sigma[0]$ is the first entry of the sequence. The subsequence of $\sigma$ from index $i$ to index $j$, inclusive, is $\sigma[i : j]$. If $\sigma$ is finite, then the length of $\sigma$ is $\len(\sigma)$. If $\sigma_1, \sigma_2 \in n^{<\omega}$, then $\sigma_1 \sigma_2$ is the concatenation of $\sigma_1$ and $\sigma_2$. The number of occurrences of a base $n$ block $\rho$ inside $\sigma$ is $\occ(\sigma, \rho)$. The empty sequence is denoted as $\epsilon$.

The base $b$ representation of a real number $r \in [0, 1]$ is denoted $(r)_b$ and refers to the sequence in $b^{\omega}$ such that $r = \sum_{i=1}^{\infty} ((r)_b[i - 1] \times b^{-i})$ and such that $(r)_b$ includes infinitely many instances of digits which are not $b - 1$. Occasionally, we will use a sequence in place of its corresponding real number.

\subsection{Probability Measures}

\begin{definition}
	A \textit{Borel probability measure} on $n^{\omega}$ is a countably additive, monotone function $\mu: \mathcal{F} \to [0, 1]$, where $\mathcal{F}$ is the Borel $\sigma$-algebra of $n^{\omega}$ and $\mu(n^{\omega}) = 1$. Since a Borel probability measure is uniquely determined by the values it takes on finite unions of basic open cylinders, when giving a Borel probability measure it is sufficient to specify a function $\rho: n^{< \omega} \to [0, 1]$ satisfying $\rho(\epsilon) = 1$, where $\epsilon$ is the empty sequence, and
	\begin{equation*}
	\rho(\sigma) = \sum_{i=0}^{n-1} \rho(\sigma i)
	\end{equation*}
	where $\sigma i$ denotes the concatenation of $\sigma$ with $i$ as a symbol in base $n$. The resulting measure sets $\mu(\dbracket{\sigma}) = \rho(\sigma)$. For this paper, we will refer to Borel probability measures as \textit{measures} and only identify the underlying function on blocks, so that $\mu(\dbracket{\sigma})$ is written as $\mu(\sigma)$.
\end{definition}

\begin{definition}
	The \textit{Lebesgue measure} $\lambda$ on $n^{\omega}$ is the measure given by setting
	\begin{equation*}
	\lambda(\sigma) = \frac{1}{n^{\len(\sigma)}}
	\end{equation*}
	for each $\sigma \in n^{< \omega}$.
\end{definition}

\begin{definition}
	The \textit{Bernoulli measure} $\mu_{\bar{p}}$ on $n^{\omega}$, with associated positive probabilities $\bar{p} = (p_0, p_1, \dotsc, p_{n - 1})$ satisfying $\sum_{i=0}^{n - 1} p_i = 1$, is the measure given by setting
	\begin{equation*}
	\mu_{\bar{p}}(\sigma) = p_{\sigma[0]} p_{\sigma[1]} \dotsb p_{\sigma[\len(\sigma) - 1]}
	\end{equation*}
	for each $\sigma \in n^{< \omega}$. Note that the Lebesgue measure on $n^{\omega}$ is exactly the Bernoulli measure on $n^{\omega}$ obtained by setting $p_i = \frac{1}{n}$ for each $i$.
\end{definition}

\subsection{Randomness}

\begin{definition}[Martin-L\"of \cite{martinlof}, see also \cite{nies}]
	Let $\mu$ be a measure on $n^{\omega}$ and $z \in n^{\omega}$. A \textit{$\mu$-Martin-L\"of test relative to $z$} is a uniformly computably enumerable (relative to $z$) sequence $(\mathcal{U}_i)_{i \in \omega}$ of subsets of $n^{\omega}$ with $\mu(\mathcal{U}_i) \leq 2^{-i}$ for every $i \in \N$. Say $x \in n^{\omega}$ \textit{passes} the test if $x \not \in \bigcap_{i \in \omega} \mathcal{U}_i$. If $x$ passes every $\mu$-Martin-L\"of test relative to $z$, then $x$ is \textit{$\mu$-Martin-L\"of random relative to $z$}.
\end{definition}

\begin{definition}
	If $x \in n^{\omega}$ is $\mu_{\bar{p}}$-Martin-L\"of random for the Bernoulli measure $\mu_{\bar{p}}$ with some probabilities $\bar{p} = (p_0, p_1, \dotsc, p_{n-1})$, then $x$ is \textit{Bernoulli random} with respect to the parameters $\bar{p}$.
\end{definition}

Bernoulli randomness for binary sequences has been studied by Porter in \cite{porter}.

\subsection{Fragments of Randomness}
\begin{definition}
	A real number $x$ is \textit{simply normal} to base $b$ if every base $b$ digit $d \in \{0, 1, \dotsc, b - 1\}$ appears with density $\frac{1}{b}$ in $(x)_b$. That is,
	\begin{equation*}
		\lim_{n \to \infty} \frac{\occ((x)_b [0 : n - 1], d)}{n} = \frac{1}{b}
	\end{equation*}
\end{definition}

Borel characterized normality in the following way.

\begin{definition}[Borel \cite{borel}]\label{def:borelnormal}
	A real number $x$ is \textit{normal} to base $b$ if for every natural $n$ and positive integer $k$, $b^n x$ is simply normal to base $b^k$.
\end{definition}

\begin{example}
	In 1933, Champernowne \cite{champernowne} gave an explicit real number which is normal to base 10. 
	\begin{equation*}
		C_{10} = 0.12345678910111213\dotsc
	\end{equation*}
	In general, let $C_n$ denote the real number with the base $n$ representation obtained by concatenating the base $n$ numbers in order. $C_n$ is normal to base $n$.
\end{example}

\begin{example}
	Among the results by Copeland and Erd\H{o}s in \cite{copelanderdos} is the fact that the real number $CE_{n}$ obtained by concatenating the primes in base $n$ in order is normal to base $n$. Then
	\begin{align*}
		CE_{10} &= 0.2357111317192329\dotsc\\
		CE_{3} &=  0.2101221102111122\dotsc
	\end{align*}
\end{example}

In 1940, Pillai simplified Borel's definition with the following theorem.

\begin{theorem}[Pillai \cite{pillai}]\label{thm:pillai}
	A real number $x$ is normal to base $b$ if and only if for every positive integer $k$, $x$ is simply normal to base $b^k$.
\end{theorem}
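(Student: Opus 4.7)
The forward direction is immediate from Borel's definition: setting $n = 0$ gives that $x = b^0 x$ is simply normal to base $b^k$ for every $k$, which is precisely Pillai's hypothesis. So the real content is the reverse direction: assuming $x$ is simply normal to base $b^k$ for every $k \geq 1$, I must show that $b^n x$ is simply normal to base $b^k$ for every $n \geq 0$ and $k \geq 1$.

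Write the base $b$ expansion of $x$ as $0.a_1 a_2 a_3 \dotsb$; then the base $b$ expansion of the fractional part of $b^n x$ is $0.a_{n+1} a_{n+2} \dotsb$, and simple normality of $b^n x$ to base $b^k$ amounts to: for every length-$k$ block $\beta$,
\begin{equation*}
\lim_{N \to \infty} \frac{\#\{j < N : a_{n+jk+1} \dotsb a_{n+jk+k} = \beta\}}{N} = b^{-k}.
\end{equation*}
Since shifting the index $j$ by an integer changes the count by $O(1)$, I may reduce modulo $k$ and assume $0 \leq n < k$; the case $n = 0$ is exactly the hypothesis. The remaining task is therefore to show: if $x$ is simply normal to base $b^k$ for all $k$, then for each offset $0 < r < k$ and each length-$k$ block $\beta$, the density of length-$k$ windows at offset $r$ in $a_1 a_2 \dotsb$ that equal $\beta$ is $b^{-k}$.

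The plan is to ``simulate'' offset windows by consuming a much larger base. Fix a multiple $m = Mk$ with $M$ large, and apply the hypothesis to base $b^m$, so each length-$m$ block $\gamma$ at positions $jm+1$ through $jm+m$ (for $j = 0, 1, 2, \dotsc$) occurs with asymptotic frequency $b^{-m}$. For $0 < r < k$, each length-$m$ block contains exactly $M-1$ full length-$k$ windows at offset $r$ (the straddling windows between consecutive length-$m$ blocks contribute only $O(N)$ to the count and are negligible in the limit). For a fixed $\beta$, a counting argument shows that $\sum_{\gamma} \phi(\gamma, r, \beta) = (M-1) b^{m-k}$, where $\phi(\gamma, r, \beta)$ is the number of length-$k$ windows at offset $r$ inside $\gamma$ that equal $\beta$, since each of the $M-1$ internal window positions matches $\beta$ for exactly $b^{m-k}$ choices of $\gamma$. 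Combining with simple normality to base $b^m$, the count of offset-$r$ length-$k$ windows equal to $\beta$ among the first $Nm$ digits of $x$ is $N(M-1) b^{-k} + o(N) + O(N)$, where the last term accounts for boundary windows. Dividing by the total number of offset-$r$ windows in this range, which is $NM + O(1)$, gives a ratio that lies within $\frac{M-1}{M} b^{-k} + O(1/M)$ of $b^{-k}$.

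Finally, to conclude the exact limit $b^{-k}$, I fix $\varepsilon > 0$, choose $M$ large enough that the $(M-1)/M$ factor and the boundary contribution are each within $\varepsilon/2$ of the target, and then let $N \to \infty$; a standard interpolation between multiples of $m$ handles arbitrary cutoffs. The main obstacle is precisely this bookkeeping: one must keep track of boundary windows and the $(M-1)/M$ defect simultaneously, then use that $M$ can be chosen freely to push both errors to $0$. Conceptually the proof says that simple normality to \emph{all} bases $b^k$ is strong enough to control blocks at every offset, because a sufficiently long block of base-$b^k$ digits grouped with the ``wrong'' offset is almost the same as a block of base-$b^{Mk}$ digits grouped with the ``right'' offset, up to a vanishing fraction of straddling positions.
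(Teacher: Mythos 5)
The paper does not reprove Pillai's theorem---it cites \cite{pillai} and moves on---so the natural comparison is to the paper's proof of the biased generalization, Corollary~\ref{cor:equivdefsbiased}, of which Theorem~\ref{thm:pillai} is the uniform special case. There the hard direction (simple normality to every power base implies normality) goes in two stages: Lemma~\ref{lem:biasednormalityalt1} converts power-base simple normality into the Niven--Zuckerman-style condition that each block appears with the right frequency counted over \emph{all} starting positions, and Lemma~\ref{lem:biasednormalityalt2} then runs a Cassels-style sliding-window argument to redistribute those occurrences evenly among the residue classes of starting positions modulo the block length. Your proposal collapses this into one step: after reducing $n \bmod k$, you fix a nonzero offset $r$, pass to the auxiliary base $b^{Mk}$, count the $M-1$ internal offset-$r$ windows inside each base-$b^{Mk}$ digit (giving $\sum_\gamma \phi(\gamma, r, \beta) = (M-1)b^{Mk-k}$), and show that the $O(N)$ straddling windows together with the $o(N)$ simple-normality error contribute only a $O(1/M)$ defect in the density, which you then kill by letting $M \to \infty$ after $N \to \infty$. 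The ``group into non-overlapping blocks of a much larger power base'' device is the same one the paper uses in Lemma~\ref{lem:biasednormalityalt1}, but because you track windows at a fixed offset from the outset, you never need the intermediate all-offsets block-frequency characterization nor the Cassels redistribution step. The paper's longer route has the side benefit of isolating the block-frequency condition as item (3) of Corollary~\ref{cor:equivdefsbiased}, which the paper reuses elsewhere; your shorter route is a clean, self-contained proof of Pillai's statement on its own. Both arguments are correct.
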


In 1950, another equivalence was proven by Niven and Zuckerman.

\begin{theorem}[Niven and Zuckerman \cite{nivenzuckerman}]\label{thm:nivenzuckerman}
	A real number $x$ is normal to base $b$ if and only if for every positive integer $\ell$, every block $w \in b^{\ell}$ appears in $(x)_b$ with frequency $\frac{1}{b^{\ell}}$.
	\begin{equation*}
		\lim_{n \to \infty} \frac{\occ((x)_b [0 : n - 1], w)}{n} = \frac{1}{b^{\ell}}
	\end{equation*}
\end{theorem}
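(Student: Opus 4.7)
The plan is to prove each direction in turn, deriving the forward direction by unpacking Borel's definition (Definition \ref{def:borelnormal}) and reducing the reverse direction via Pillai's theorem (Theorem \ref{thm:pillai}) to simple normality in each base $b^k$.

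For the forward direction, fix $\ell \geq 1$ and $w \in b^\ell$. For each $n \in \{0, 1, \ldots, \ell - 1\}$, Borel's definition applied with $k = \ell$ gives that $b^n x$ is simply normal to base $b^\ell$. Since multiplying by $b^n$ shifts the base-$b$ expansion of $x$ leftward by $n$ positions, the $j$-th base-$b^\ell$ digit of $b^n x$ is exactly the length-$\ell$ block $(x)_b[n + j\ell : n + (j+1)\ell - 1]$. Simple normality therefore provides frequency $1/b^\ell$ of $w$ along the arithmetic progression of starting positions $\{n + j\ell : j \geq 0\}$. As $n$ ranges over $\{0, 1, \ldots, \ell - 1\}$, these $\ell$ progressions partition all starting positions in $(x)_b$, and summing the sub-densities gives total frequency $1/b^\ell$, as required.

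For the reverse direction, suppose every $w \in b^\ell$ appears in $(x)_b$ with frequency $1/b^\ell$ for each $\ell$. By Pillai's theorem it suffices to show that $x$ is simply normal to base $b^k$ for every $k \geq 1$, equivalently that each $w \in b^k$ appears with density $1/b^k$ at positions which are multiples of $k$ in $(x)_b$. Writing $D_w^{(r)}$ for the density of $w$ at positions $\equiv r \pmod{k}$ relative to the size of that residue class, the hypothesis applied at length $k$ yields only the averaged identity $\frac{1}{k}\sum_{r=0}^{k-1} D_w^{(r)} = 1/b^k$, so the main obstacle is upgrading this to the pointwise identity $D_w^{(0)} = 1/b^k$.

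To clear this obstacle, I plan to exploit the hypothesis at block lengths other than $k$. For each $s \geq 1$, the conservation identity $\sum_{u \in b^s} D_{uw}^{(r)} = D_w^{(r + s \bmod k)}$ (an occurrence of $uw$ at position $p$ is the same event as an occurrence of $w$ at position $p + s$), together with the analogous identity obtained by appending digits on the right and the averaged identity coming from the hypothesis at length $k + s$, yields a substantial family of linear relations among the residue-class densities. Combining these relations across increasing $s$ should force every $D_w^{(r)}$ to equal $1/b^k$, completing the reverse direction. The delicate step will be the combinatorial bookkeeping needed to verify that the accumulated linear system admits only the uniform solution.
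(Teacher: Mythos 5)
The paper only cites this theorem to Niven and Zuckerman and does not reproduce their proof; the closest analog actually proved in the paper is the biased version established by the chain of Lemmas \ref{lem:biasednormalityalt0}, \ref{lem:biasednormalityalt1}, \ref{lem:biasednormalityalt2} and assembled in Corollary \ref{cor:equivdefsbiased}, so I compare your proposal against that argument.

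Your forward direction is fine, and in fact cleaner than the route taken in Lemma \ref{lem:biasednormalityalt1}: applying Definition \ref{def:borelnormal} with $k=\ell$ for each offset $n \in \{0,\dotsc,\ell-1\}$, simple normality of $b^n x$ to base $b^{\ell}$ gives frequency $1/b^{\ell}$ of $w$ along the arithmetic progression of starting positions congruent to $n \bmod \ell$; the $\ell$ progressions partition all starting positions with equal natural density $1/\ell$, and since each sub-limit exists by hypothesis, they sum to overall frequency $1/b^{\ell}$. Lemma \ref{lem:biasednormalityalt1} cannot argue this way because it starts from the weaker premise of simple normality of $x$ alone (rather than of every shift $b^n x$), which is why it resorts to the Hoeffding estimate of Lemma \ref{lem:badblocks}.

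The reverse direction has a genuine gap. You correctly identify the obstacle of upgrading the averaged identity $\frac{1}{k}\sum_{r} D_w^{(r)} = b^{-|w|}$ to the pointwise identity $D_w^{(0)} = b^{-k}$, but the linear relations you propose do not have only the uniform solution, so no amount of bookkeeping of those relations alone can close the argument. Concretely, take $b=k=2$ and write $D_w^{(r)} = 2^{-|w|} + (-1)^r f(w)$. The averaged hypothesis at every block length is then automatic; the shift identity $\sum_u D_{uw}^{(r)} = D_w^{(r+1)}$ reduces to $f(0w)+f(1w) = -f(w)$; the right-extension identity $\sum_u D_{wu}^{(r)} = D_w^{(r)}$ reduces to $f(w0)+f(w1) = f(w)$; and $f(\epsilon) = 0$. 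One checks directly that $f(0)=a$, $f(1)=-a$, $f(00)=t$, $f(01)=a-t$, $f(10)=-a-t$, $f(11)=t$ satisfies all three families for arbitrary $a,t$, so already at depth $2$ the linear system has a two-parameter family of solutions, and the free parameters persist at every depth. What ultimately excludes the spurious solutions is the requirement that the $D_w^{(r)}$ be genuine nonnegative densities arising from a single sequence, and extracting that conclusion needs a substantive additional input: either an ergodic-theoretic argument (a subsequential phase measure is $T^k$-invariant with phase average equal to Lebesgue, and mixing of the Bernoulli shift then forces each phase to be Lebesgue), or, as in Lemma \ref{lem:biasednormalityalt2}, Cassels' quantitative route through Lemma \ref{lem:badblocks}: all but an exponentially small $\mu_{\bar p}$-measure of length-$s$ blocks already have the expected per-residue occurrence count of $w$, and the hypothesis applied at length $s$ guarantees those exceptional blocks appear in $(x)_b$ with correspondingly small density. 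Without some concentration or ergodicity input of this kind, the ``delicate combinatorial bookkeeping'' you anticipate cannot succeed, because the linear system genuinely does not determine the answer.
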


One important connection between normal numbers and algorithmic randomness is the following theorem.

\begin{theorem}\label{thm:mlrandomabsolutely}
	Every $\lambda$-Martin-L\"of random real is \textit{absolutely normal} --- normal in every base.
\end{theorem}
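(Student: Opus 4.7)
The plan is to combine Pillai's theorem (Theorem \ref{thm:pillai}) with an effective Chernoff-type concentration bound. By Pillai, $x$ is normal to base $b$ iff $x$ is simply normal to base $b^k$ for every $k \geq 1$. Since every integer $m \geq 2$ equals $m^1$, absolute normality of $x$ is equivalent to simple normality of $x$ to every base $m \geq 2$. It therefore suffices to prove, for each $m \geq 2$ and each digit $d < m$, that the set $B_{m,d}$ of reals whose base-$m$ expansion fails to contain digit $d$ with density $1/m$ is contained in $\bigcap_i \mathcal{U}_i^{m,d}$ for some $\lambda$-Martin-L\"of test $(\mathcal{U}_i^{m,d})_i$.

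Fix $m$ and $d$. Modulo the $\lambda$-null set of reals with two representations, the base-$m$ digits of a Lebesgue-random real form an i.i.d.\ sequence uniform on $\{0, \dotsc, m-1\}$, so the events $\{(x)_m[i] = d\}$ are i.i.d.\ Bernoulli$(1/m)$. A standard Chernoff--Hoeffding estimate then yields
\begin{equation*}
\lambda\left\{ x : \left| \frac{\occ((x)_m[0:n-1], d)}{n} - \frac{1}{m} \right| > \varepsilon \right\} \leq 2 e^{-2 n \varepsilon^2}.
\end{equation*}
I would then pick computable sequences $\varepsilon_i \downarrow 0$ and $N_i \in \N$ satisfying $\sum_{n \geq N_i} 2 e^{-2 n \varepsilon_i^2} \leq 2^{-i}$, and set
\begin{equation*}
\mathcal{U}_i^{m,d} := \bigcup_{n \geq N_i} \left\{ x : \left| \frac{\occ((x)_m[0:n-1], d)}{n} - \frac{1}{m} \right| > \varepsilon_i \right\}.
\end{equation*}
Each $\mathcal{U}_i^{m,d}$ is a uniformly c.e.\ clopen union of $\lambda$-measure at most $2^{-i}$, and every $x \in B_{m,d}$ either has no limiting frequency for $d$ or the wrong one, so it violates the threshold $\varepsilon_i$ infinitely often and lies in $\bigcap_i \mathcal{U}_i^{m,d}$.

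To finish, I would merge the countably many tests $(\mathcal{U}_i^{m,d})_i$ indexed by pairs $(m,d)$ into a single $\lambda$-Martin-L\"of test by the standard shift-and-union trick: enumerating them as $(V_i^{(j)})_{j \in \N}$, the sets $\mathcal{W}_i := \bigcup_{j \in \N} V_{i+j+1}^{(j)}$ form a $\lambda$-Martin-L\"of test whose intersection contains $\bigcup_{m,d} B_{m,d}$, so every $\lambda$-Martin-L\"of random real avoids it and is simply normal to every base, hence absolutely normal. The main obstacle is the quantitative choice of $\varepsilon_i$ and $N_i$: $\varepsilon_i$ must tend to $0$ slowly enough that every non-normal real is captured, yet the exponential tail sum must remain below $2^{-i}$, and both sequences must be computable so that the final test is genuinely uniformly c.e.
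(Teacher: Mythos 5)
Your approach is genuinely different from the paper's: the paper obtains this theorem as a short corollary of Theorem~\ref{thm:bernoullinormal} (every Bernoulli random real is biased normal), which in turn runs through the full block-frequency equivalence of Corollary~\ref{cor:equivdefsbiased}. You instead argue directly, specializing to the uniform measure and using only Pillai's theorem (Theorem~\ref{thm:pillai}) to reduce absolute normality to simple normality in every base $m \geq 2$. This is more elementary and bypasses the biased-normality machinery entirely, though it does not yield the paper's more general Theorem~\ref{thm:bernoullinormal}. The Hoeffding estimate, the bundling of countably many tests by the shift-and-union trick, and the use of Pillai to avoid reasoning about blocks are all sound.

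There is, however, a genuine gap in the single-$(m,d)$ test. With $\varepsilon_i \downarrow 0$ and your $\mathcal{U}_i^{m,d}$, consider an $x$ whose digit-$d$ frequency in base $m$ converges to $\frac{1}{m} + \delta$ for some small $\delta > 0$. Then $x$ violates the threshold $\varepsilon_i$ infinitely often only for those $i$ with $\varepsilon_i < \delta$; for the (finitely many but nonempty) indices $i$ with $\varepsilon_i \geq \delta$, the deviation eventually stays below $\varepsilon_i$, so once $N_i$ is large enough we get $x \notin \mathcal{U}_i^{m,d}$, and therefore $x \notin \bigcap_i \mathcal{U}_i^{m,d}$. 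So the claim that every non-simply-normal real ``violates the threshold $\varepsilon_i$ infinitely often'' for \emph{every} $i$ is false, and the obstacle you flag at the end --- choosing $\varepsilon_i$ to decay ``slowly enough'' --- does not cure it, since reals with arbitrarily small limiting deviation still escape the small-index levels no matter how slowly $\varepsilon_i$ shrinks. The standard repair is to pass to the tails: set
\begin{equation*}
\mathcal{V}_i^{m,d} := \bigcup_{j > i} \mathcal{U}_j^{m,d},
\end{equation*}
which still has $\lambda(\mathcal{V}_i^{m,d}) \leq \sum_{j > i} 2^{-j} = 2^{-i}$, remains uniformly c.e., and satisfies
\begin{equation*}
\bigcap_{i} \mathcal{V}_i^{m,d} = \left\{ x : x \in \mathcal{U}_j^{m,d} \text{ for infinitely many } j \right\},
\end{equation*}
a set that does contain every real failing simple normality to base $m$ at digit $d$. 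Equivalently, you could index a separate test by each rational threshold $\varepsilon = 1/j$ and fold those into the same final shift-and-union. With that adjustment the argument is correct.
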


\section{Generalizations}\label{sec:generalizations}

The goal of this section is to prove a version of Theorem \ref{thm:mlrandomabsolutely} for Bernoulli random numbers. To do this, we define a notion of normality given biases on the digits. We will mirror the historical development of normality by generalizing Borel's original definitions of \textit{simply normal} and \textit{normal} to allow for given biases on the digits. In base $b$, the biases $p_0, p_1, \dotsc, p_{b-1}$, also called ``densities'' or ``probabilities'', will be assumed to be positive real numbers adding to $1$.

\begin{definition}
	A real number $x$ is \textit{biased simply normal} to the biases $p_0, p_1, \dotsc, p_{b-1}$ if each base $b$ digit $d \in \{0, 1, \dotsc, b-1\}$ appears with density $p_d$ in $(x)_b$. That is,
	\begin{equation*}
		\lim_{n \to \infty} \frac{\occ((x)_b [0 : n - 1], d)}{n} = p_d
	\end{equation*}
\end{definition}

\begin{definition}\label{def:biasednormal}
	A real number $x$ is \textit{biased normal} with respect to the biases $p_0, p_1, \dotsc, p_{b - 1}$ if for every natural $n$ and positive integer $k$, $b^n x$ is biased simply normal to $p_{k, 0}^*, p_{k, 1}^*, \dotsc, p_{k, b^k - 1}^*$, where for each $i \in \{0, \dotsc, b^k - 1\}$,
	\begin{equation*}
		p_{k, i}^* = \prod_{j=0}^{k - 1} p_{(i)_{b}[j]}
	\end{equation*}
	and where here $(i)_{b}$ contains sufficient zero-padding so that it has exactly $k$ digits.
	
	The frequencies $\bar{p^*} = (p_{k, 0}^*, p_{k, 1}^*, \dotsc, p_{k, b^k - 1}^*)$ are such that if $w^*$ is a base $b^k$ block and $w$ is the corresponding base $b$ block, then $\mu_{\bar{p^*}}(w^*) = \mu_{\bar{p}}(w)$.
\end{definition}

As shown for the case of normality in Theorems \ref{thm:pillai} and \ref{thm:nivenzuckerman}, the definition of \textit{biased normal} can be simplified. To prove this, we will require the following definition.

\begin{definition}\label{def:simplediscrepancy}
	Let $w$ be a length $\ell$ block of digits in base $b$. Let $p_0, p_1, \dotsc, p_{b-1}$ be biases. Then the \textit{simple discrepancy} of $w$ with respect to the biases is
	\begin{equation*}
		\max_{d \in \{0, 1, \dotsc, b - 1\}} \abs{\frac{\occ(w, d)}{\ell} - p_d}
	\end{equation*}
\end{definition}

\begin{lemma}\label{lem:multinom}
	Fix a base $b$, a digit $d$, and a block length $k$. Let $S_i \subseteq b^k$ be the set of blocks of length $k$ containing exactly $i$ instances of $d$. The Bernoulli measure of $S_i$ is
	\begin{equation*}
	\mu_{\bar{p}}(S_i) = \binom{k}{i} p_d^i (1 - p_d)^{k - i}
	\end{equation*}
\end{lemma}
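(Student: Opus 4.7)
The plan is to recognize this as essentially the binomial distribution formula and prove it by a direct counting-and-factoring argument, partitioning $S_i$ according to the positions of the digit $d$. First I would fix a subset $T \subseteq \{0, 1, \dotsc, k-1\}$ with $\abs{T} = i$ and let $S_i^T \subseteq S_i$ denote the set of blocks $w \in b^k$ such that $w[j] = d$ exactly when $j \in T$. Then $S_i = \bigsqcup_{\abs{T} = i} S_i^T$, and there are $\binom{k}{i}$ such subsets $T$, so by additivity
\begin{equation*}
    \mu_{\bar{p}}(S_i) = \sum_{\abs{T} = i} \mu_{\bar{p}}(S_i^T).
\end{equation*}

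Next I would compute $\mu_{\bar{p}}(S_i^T)$ for a single $T$. By the definition of the Bernoulli measure, for each $w \in S_i^T$ we have $\mu_{\bar{p}}(w) = \prod_{j=0}^{k-1} p_{w[j]} = p_d^i \cdot \prod_{j \notin T} p_{w[j]}$, since the $i$ positions in $T$ all contribute a factor of $p_d$. Summing over all $w \in S_i^T$ — equivalently, over all functions from $\{0, 1, \dotsc, k-1\} \setminus T$ into $\{0, 1, \dotsc, b-1\} \setminus \{d\}$ — and distributing yields
\begin{equation*}
    \mu_{\bar{p}}(S_i^T) = p_d^i \cdot \prod_{j \notin T} \Bigl(\sum_{e \neq d} p_e\Bigr) = p_d^i (1 - p_d)^{k-i},
\end{equation*}
where the last equality uses $\sum_{e \neq d} p_e = 1 - p_d$. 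This value does not depend on the particular choice of $T$, so multiplying by the number of subsets gives the claimed formula.

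There is no real obstacle here; the whole content of the lemma is the observation that the Bernoulli measure on blocks factors as a product, so lumping digits into ``equals $d$'' versus ``not equal to $d$'' reduces the multinomial-looking situation to a binomial one. The only thing to be careful about is the factoring step: I would want to verify explicitly that $\sum_{e \neq d} p_e = 1 - p_d$ uses the hypothesis $\sum_{e=0}^{b-1} p_e = 1$ built into the definition of the Bernoulli measure, and that distributivity justifies pulling the product of sums out of the sum of products over assignments to the non-$T$ coordinates.
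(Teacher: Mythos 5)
Your proof is correct, and it takes a genuinely different and somewhat cleaner route than the paper's. The paper also fixes the $\binom{k}{i}$ position sets for $d$, but then, for the remaining $k-i$ positions, it groups blocks by the vector of digit counts $(n_1, \dotsc, n_{b-1})$ among the non-$d$ digits, writes the sum over these vectors with multinomial coefficients $\binom{k-i}{n_1, \dotsc, n_{b-1}}$, and then invokes the multinomial theorem to collapse that sum to $\bigl(\sum_{e \neq d} p_e\bigr)^{k-i}$. You instead bypass the digit-count bookkeeping and the multinomial theorem entirely: once the positions $T$ are fixed, you observe that summing $\prod_{j \notin T} p_{w[j]}$ over all assignments of the non-$T$ coordinates is exactly the expansion of the product $\prod_{j \notin T}\bigl(\sum_{e \neq d} p_e\bigr)$, i.e.\ plain distributivity. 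This buys you a shorter, more elementary argument with fewer moving parts (no multinomial coefficient identity to state or cite), while the paper's version makes the multinomial structure explicit, which is arguably thematically consistent with a lemma named ``multinom.'' Both correctly rely on the lemma's implicit hypotheses that the $p_e$ sum to $1$ and that the Bernoulli measure factors over positions, which you were right to flag as the only points needing care.
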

\begin{proof}
	We know that the number of blocks in $S_i$ is
	\begin{equation*}
		\card{S_i} = \binom{k}{i} (b - 1)^{k - i}
	\end{equation*}
	since there are $\binom{k}{i}$ choices for where to put the $i$ instances of $d$ and $k - i$ places where one of $b - 1$ digits occur. We assume without loss of generality and for ease of notation that $d = 0$. For $w \in S_i$, let
	\begin{equation*}
		n_e = \occ(w, e)
	\end{equation*}
	for a digit $e$ in base $b$. The measure of any such $w$ is
	\begin{equation*}
		\mu_{\bar{p}}(w) = p_0^i \prod_{m = 1}^{b - 1} p_m^{n_m}
	\end{equation*}
	To find the measure of $S_i$, we can take the sum of the measures over all such $w$ with digit counts $n_1, n_2, \dotsc, n_{b - 1} \in \N$ such that $\sum_{m = 1}^{b - 1} n_m = k - i$. The number of such $w$ is
	\begin{equation*}
		\sum_{n_1 + n_2 + \dotsb + n_{b - i} = k - i} \binom{k}{i} \binom{k - i}{n_1, n_2, \dotsc, n_{b - 1}}
	\end{equation*}
	where
	\begin{equation*}
		\binom{k - i}{n_1, n_2, \dotsc, n_{b - 1}} = \frac{(k - i)!}{n_1 ! n_2 ! \dotsb n_{b - 1}!}
	\end{equation*}
	is the multinomial coefficient. This is because there are $\binom{k}{i}$ many choices for the locations of $d = 0$, and for each sum $n_1 + n_2 + \dotsb + n_{b - 1} = k - i$ there are $\binom{k - i}{n_1, n_2, \dotsc, n_{b - 1}}$ different length $k - i$ sequences $w$ with $\occ(w, e) = n_e$ for each $e$ from $1$ to $b - 1$. So the measure of $S_i$ is
	\begin{align*}
		\mu_{\bar{p}}(S_i) &= \sum_{n_1 + n_2 + \dotsb + n_{b - i} = k - i} \binom{k}{i} \binom{k - i}{n_1, n_2, \dotsc, n_{b - i}} p_0^i \prod_{j = 1}^{b - 1} p_j^{n_j}\\
		\mu_{\bar{p}}(S_i) &= \binom{k}{i} p_0^i \sum_{n_1 + n_2 + \dotsb + n_{b - i} = k - i} \binom{k - i}{n_1, n_2, \dotsc, n_{b - i}} \prod_{j = 1}^{b - 1} p_j^{n_j}
	\end{align*}
	By the multinomial theorem \cite{feller},
	\begin{equation*}
		\sum_{n_1 + n_2 + \dotsb + n_{b - i} = k - i} \binom{k - i}{n_1, n_2, \dotsc, n_{b - i}} \prod_{j = 1}^{b - 1} p_j^{n_j} = \left(\sum_{j = 1}^{b - 1} p_j\right)^{k - i}
	\end{equation*}
	Therefore
	\begin{align*}
		\mu_{\bar{p}}(S_i) &= \binom{k}{i} p_0^i \left(\sum_{j = 1}^{b - 1} p_j\right)^{k - i}
		\intertext{and we know $\sum_{j = 1}^{b - 1} p_j = 1 - p_0$, so}
		\mu_{\bar{p}}(S_i) &= \binom{k}{i} p_0^i \left(1 - p_0\right)^{k - i}
	\end{align*}
	which is the desired equality for $d = 0$.
\end{proof}

\begin{lemma}\label{lem:badblocks}
	Let $0 < \varepsilon < \min(p_0, \dotsc, p_{b-1})$. Fix a block length $k$. Say that a block $w$ of length $k$ is ``bad'' for a digit $d$ if
	\begin{align*}
	\occ(w, d) &\leq (p_d - \varepsilon)k
	\intertext{or}
	\occ(w, d) &\geq (p_d + \varepsilon)k
	\end{align*}
	Let $B$ be the set of such $w$.
	\begin{equation*}
	B = \{w \in b^k : \abs{\occ(w, d) - p_d} \geq \varepsilon k\}
	\end{equation*}
	Then the Bernoulli measure of $B$ in $b^{\omega}$ with parameters $p_0, p_1, \dotsc, p_{b - 1}$ is at most $2e^{-2 \varepsilon^2 k}$.
\end{lemma}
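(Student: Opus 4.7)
The plan is to reduce the Bernoulli measure of $B$ to a binomial tail probability and then apply a Hoeffding-type concentration bound. By Lemma \ref{lem:multinom}, the sets $S_i = \{w \in b^k : \occ(w, d) = i\}$ have measure $\mu_{\bar{p}}(S_i) = \binom{k}{i} p_d^i (1 - p_d)^{k - i}$, so
\begin{equation*}
\mu_{\bar{p}}(B) = \sum_{i : \abs{i - p_d k} \geq \varepsilon k} \binom{k}{i} p_d^i (1 - p_d)^{k - i} = \Pr[\abs{X - p_d k} \geq \varepsilon k],
\end{equation*}
where $X$ is a sum of $k$ independent Bernoulli$(p_d)$ random variables. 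In other words, checking whether digit $d$ appears at each of the $k$ positions yields $k$ i.i.d.\ $\{0,1\}$-valued trials with success probability $p_d$.

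Next, I would split $B$ into its upper and lower tails, $B^+ = \{w : \occ(w, d) \geq (p_d + \varepsilon)k\}$ and $B^- = \{w : \occ(w, d) \leq (p_d - \varepsilon)k\}$, and bound each by $e^{-2\varepsilon^2 k}$ using Hoeffding's inequality for sums of independent $[0,1]$-valued random variables. The standard derivation proceeds by a Chernoff/Markov argument: for any $t > 0$,
\begin{equation*}
\Pr[X - p_d k \geq \varepsilon k] \leq e^{-t \varepsilon k} \, \mathbb{E}[e^{t(X - p_d k)}] = e^{-t \varepsilon k} \bigl((1 - p_d) e^{-t p_d} + p_d e^{t(1 - p_d)}\bigr)^k,
\end{equation*}
and Hoeffding's lemma bounds the moment-generating factor by $e^{t^2/8}$. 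Optimizing over $t$ (namely $t = 4\varepsilon$) yields $\Pr[X - p_d k \geq \varepsilon k] \leq e^{-2\varepsilon^2 k}$. A symmetric argument applied to $-X$ gives the same bound on $\Pr[X - p_d k \leq -\varepsilon k]$.

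Adding the two tail bounds, $\mu_{\bar{p}}(B) \leq 2 e^{-2\varepsilon^2 k}$, as required. The main obstacle is simply the Hoeffding/Chernoff step, which is standard but requires either citing Hoeffding's inequality directly or supplying the short MGF computation above; everything else is a matter of restating Lemma \ref{lem:multinom} and observing that the two tails are controlled symmetrically. The hypothesis $\varepsilon < \min(p_0, \dotsc, p_{b-1})$ does not appear in the calculation itself, but it ensures that both tail events are nontrivial (i.e.\ that $(p_d - \varepsilon)k$ and $(p_d + \varepsilon)k$ lie strictly inside $[0, k]$).
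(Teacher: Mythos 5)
Your proposal is correct and follows the same route as the paper: reduce to the binomial distribution via Lemma \ref{lem:multinom}, split $B$ into the upper and lower tails, and bound each by $e^{-2\varepsilon^2 k}$ via Hoeffding's inequality. The only difference is that you supply the Chernoff/MGF derivation of Hoeffding's bound, whereas the paper simply cites it.
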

\begin{proof}
	Let $i$ be an integer such that $0 \leq i \leq k$. Let $B_i$ be set of blocks of length $k$ containing exactly $i$ instances of the digit $d$. The Bernoulli measure of $B_i$ in $b^{\omega}$ with parameters $p_0, p_1, \dotsc, p_{b - 1}$ is, by Lemma \ref{lem:multinom},
	\begin{equation*}
	\mu_{\bar{p}}(B_i) = \binom{k}{i} (p_d)^i (1 - p_d)^{k - i}
	\end{equation*}
	Notice that this is the binomial distribution with $k$ trials and $i$ successes, where the probability of success is $p_d$. To calculate $\mu_{\bar{p}}(B)$, we have
	\begin{align*}
	B &=\ \ \bigcup_{i=0}^{\mathclap{\floor{(p_d - \varepsilon)k}}} B_i\ \ \cup\ \ \ \ \bigcup_{\mathclap{i=\ceil{(p_d + \varepsilon)k}}}^{k} B_i
	\intertext{where all the unions are of pairwise disjoint sets. Then}
	\mu_{\bar{p}}(B) &= \sum_{i=0}^{\mathclap{\floor{(p_d - \varepsilon)k}}} \mu_{\bar{p}}(B_i)\ +\ \sum_{\mathclap{i=\ceil{(p_d + \varepsilon)k}}}^{k} \mu_{\bar{p}}(B_i)
	\end{align*}
	We expand both appearances of $\mu_{\bar{p}}(B_i)$ as above.
	\begin{equation*}
	\mu_{\bar{p}}(B) = \sum_{i=0}^{\floor{(p_d - \varepsilon)k}} \binom{k}{i} (p_d)^i (1 - p_d)^{k - i} + \sum_{i=\ceil{(p_d + \varepsilon)k}}^{k} \binom{k}{i} (p_d)^i (1 - p_d)^{k - i}
	\end{equation*}
	Apply Hoeffding's inequality \cite{vershynin} on the tail ends of the binomial distribution to get that
	\begin{align*}
	\sum_{i=0}^{\floor{(p_d - \varepsilon)k}} \binom{k}{i} (p_d)^i (1 - p_d)^{k - i} &\leq e^{-2\varepsilon^2 k}
	\intertext{and}
	\sum_{i=\ceil{(p_d + \varepsilon)k}}^{k} \binom{k}{i} (p_d)^i (1 - p_d)^{k - i} &\leq e^{-2\varepsilon^2 k}
	\end{align*}
	It follows that $\mu_{\bar{p}}(B) \leq 2e^{-2\varepsilon^2 k}$.
\end{proof}

Definition \ref{def:borelnormal}, Theorem \ref{thm:pillai}, and Theorem \ref{thm:nivenzuckerman} give three equivalent characterizations of normality. The next three lemmas accomplish the same task for biased normality.

\begin{lemma}\label{lem:biasednormalityalt0}
	If $x$ is biased normal to $p_0, p_1, \dotsc, p_{b-1}$, then for every positive integer $k$, $x$ is biased simply normal to $p_{k, 0}^*, p_{k, 1}^*, \dotsc, p_{k, b^k - 1}^*$, where for each $i \in \{0, \dotsc, b^k - 1\}$,
	\begin{equation*}
		p_{k, i}^* = \prod_{j=0}^{k - 1} p_{(i)_{b}[j]}
	\end{equation*}
\end{lemma}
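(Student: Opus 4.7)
The plan is to observe that this lemma is an immediate specialization of Definition \ref{def:biasednormal}. The lemma asks us to deduce, from biased normality of $x$ with respect to $(p_0, \dotsc, p_{b-1})$, that for every positive integer $k$ the real $x$ itself is biased simply normal in base $b^k$ with respect to the $b^k$-tuple $(p_{k,0}^*, \dotsc, p_{k, b^k - 1}^*)$, where each $p_{k,i}^*$ is the product of the biases along the base-$b$ digits of $i$. But this is exactly the content of Definition \ref{def:biasednormal} after fixing a single value of the shift parameter $n$.

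Concretely, Definition \ref{def:biasednormal} says that biased normality of $x$ means that for every natural $n$ and every positive integer $k$, the real $b^n x$ is biased simply normal in base $b^k$ to the biases $p_{k,i}^*$. To prove the lemma, I would simply instantiate this with $n = 0$. Since $b^0 x = x$, the universal quantification over $k$ then delivers the desired conclusion verbatim, with no further computation required.

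There is no genuine obstacle here; the lemma is a formal consequence of unpacking the definition. It is presumably stated as the first of three successive equivalences that together will play the role, for biased normality, that Definition \ref{def:borelnormal}, Theorem \ref{thm:pillai}, and Theorem \ref{thm:nivenzuckerman} play for ordinary normality. The substantive content of the Pillai-style converse and of the Niven--Zuckerman-style block reformulation will live in the subsequent lemmas, and that is where the multinomial computation of Lemma \ref{lem:multinom} and the Hoeffding estimate of Lemma \ref{lem:badblocks} will be brought to bear.
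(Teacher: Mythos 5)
Your proof is correct and matches the paper's: the paper likewise observes that the lemma is a special case of Definition \ref{def:biasednormal}, and your explicit instantiation at $n=0$ (so $b^0 x = x$) is exactly the step being invoked.
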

\begin{proof}
	This lemma follows immediately from the definition of \textit{biased normal}, as it is a special case of the definition.
\end{proof}

\begin{lemma}\label{lem:biasednormalityalt1}
	If for every positive integer $k$, $x$ is biased simply normal to $p_{k, 0}^*, p_{k, 1}^*, \dotsc, p_{k, b^k - 1}^*$, where for each $i \in \{0, \dotsc, b^k - 1\}$,
	\begin{equation*}
		p_{k, i}^* = \prod_{j=0}^{k - 1} p_{(i)_{b}[j]}
	\end{equation*}
	then for each positive integer $r$ and each block $v \in b^r$,
	\begin{equation*}
		\lim_{n \to \infty} \frac{\occ((x)_b[0 : n - 1], v)}{n} = \prod_{j=0}^{r - 1} p_{v[j]} = \mu_{\bar{p}}(v)
	\end{equation*}
\end{lemma}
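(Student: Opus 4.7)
The plan is to adapt the classical Niven--Zuckerman argument in the biased setting: use simple normality in base $b^k$ for large $k$ to estimate the overlapping count of $v$, then let $k$ tend to infinity. Fix $r$ and a block $v \in b^r$, and throughout the argument we will only consider $k > r$.

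First, I would split the initial segment of $(x)_b$ into consecutive non--overlapping blocks of length $k$ (that is, read $x$ as an element of $(b^k)^\omega$). By the hypothesis, for each $w \in b^k$ the asymptotic frequency of $w$ among these aligned blocks equals $p_{k, w}^* = \prod_{j=0}^{k-1} p_{w[j]} = \mu_{\bar p}(w)$, since biased simple normality in base $b^k$ is exactly the statement that each base-$b^k$ digit (i.e.\ each length-$k$ block) appears in the aligned decomposition with density $p^*_{k, \cdot}$. Every occurrence of $v$ in $(x)_b$ either sits entirely inside one of these length-$k$ chunks or straddles a boundary between two consecutive chunks; the number of straddling positions per chunk boundary is at most $r - 1$.

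Second, I would compute the ``within--chunk'' contribution. If there are $N$ aligned chunks in $(x)_b[0 : Nk - 1]$, then the number of within--chunk occurrences of $v$ equals $\sum_{w \in b^k} (\text{number of chunks equal to }w) \cdot \occ(w, v)$. By the hypothesis, as $N \to \infty$ this is asymptotically $N \sum_{w \in b^k} \mu_{\bar p}(w) \, \occ(w, v)$. Exchanging the sum over $w$ with the sum over the $k - r + 1$ starting positions inside $w$, and using that for a fixed position $i$ the probability (under $\mu_{\bar p}$) that a random length-$k$ block has $v$ at position $i$ is exactly $\mu_{\bar p}(v)$, I get
\begin{equation*}
\sum_{w \in b^k} \mu_{\bar p}(w) \, \occ(w, v) \;=\; (k - r + 1)\, \mu_{\bar p}(v).
\end{equation*}
Dividing by $Nk = n$, the within--chunk density of $v$ tends to $\frac{k - r + 1}{k}\,\mu_{\bar p}(v)$, while the straddling contribution is bounded by $\frac{N(r-1)}{Nk} = \frac{r-1}{k}$.

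Third, I would assemble the $\varepsilon$-argument. Given $\varepsilon > 0$, choose $k$ so large that both $\frac{r-1}{k}$ and $\bigl(1 - \frac{k - r + 1}{k}\bigr)\mu_{\bar p}(v)$ are less than $\varepsilon / 3$. With this $k$ fixed, the hypothesis (applied to the finitely many blocks $w \in b^k$) yields an $n_0$ such that for $n \geq n_0$ with $n$ a multiple of $k$, the within--chunk density of $v$ differs from $\frac{k-r+1}{k}\mu_{\bar p}(v)$ by less than $\varepsilon/3$. Combining the three error terms gives $\bigl| \occ((x)_b[0 : n - 1], v)/n - \mu_{\bar p}(v) \bigr| < \varepsilon$, and handling $n$ that is not a multiple of $k$ costs only an additive $O(k/n)$ that is absorbed by further enlarging $n_0$. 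The main (mild) obstacle is simply keeping the three sources of error aligned---boundary straddling, the $(k-r+1)/k$ versus $1$ gap, and the finite-$n$ deviation from the simple-normality frequencies---and noting that the last of these involves only finitely many digits for each fixed $k$, so no uniform rate is needed.
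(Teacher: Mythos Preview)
Your argument is correct, and it is genuinely different from the paper's. Both proofs parse $(x)_b$ into aligned length-$k$ (or $N$) chunks and separate straddling from within-chunk occurrences, but they diverge on how the within-chunk contribution is controlled. The paper classifies length-$N$ blocks as ``good'' or ``bad'' according to their simple discrepancy, invokes Lemma~\ref{lem:badblocks} (Hoeffding) to bound the $\mu_{\bar p}$-measure of bad blocks, and then combines simple normality in base $b^N$ with worst-case counts on bad blocks and discrepancy bounds on good blocks. You instead compute the exact Bernoulli mean $\sum_{w\in b^k}\mu_{\bar p}(w)\,\occ(w,v)=(k-r+1)\mu_{\bar p}(v)$ and use simple normality in base $b^k$ directly on the finite set $b^k$ to pass to that mean, with no concentration inequality needed.

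What your route buys is simplicity: it is the straight Niven--Zuckerman argument transported to the biased setting, and it avoids Lemma~\ref{lem:badblocks} entirely; the only probabilistic input is the elementary identity for the mean. What the paper's route buys is a more quantitative picture of \emph{which} blocks carry the mass (good versus bad), which aligns with the discrepancy language it sets up in Definition~\ref{def:simplediscrepancy} and reuses later; but for the bare statement of this lemma that extra structure is unnecessary. One small bookkeeping point you should make explicit when writing it out: after fixing $k$, the error $\bigl|\sum_w (\text{count}_w/N - \mu_{\bar p}(w))\,\occ(w,v)\bigr|$ is bounded by $\varepsilon\sum_{w\in b^k}\occ(w,v)=\varepsilon(k-r+1)b^{k-r}$, so the ``for $N$ large'' threshold depends on $k$ through this constant; you already flag that only finitely many $w$ are involved, which is exactly what makes this harmless.
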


\begin{proof}
	Fix $r$ and $v \in b^r$. Let $\varepsilon_1, \varepsilon_2 > 0$. By Lemma $\ref{lem:badblocks}$, there is a sufficiently large positive integer $N_0$ such that all $N \geq N_0$, all but a $\mu_{\bar{p}}$-measure at most $\varepsilon_1$ subset $B_0$ of length $N$ base $b$ blocks have simple discrepancy less than $\varepsilon_2$ when parsed in length $r$ intervals starting from index $0$. Moreover, we argue that $N$ can be made sufficiently large so that for each $m$ from $0$ to $r - 1$, all but a $\mu_{\bar{p}}$-measure $b^m \varepsilon_1$ subset $B_m$ of length $N$ base $b$ blocks have simple discrepancy less than $\varepsilon_2$ when parsed in length $r$ intervals starting from index $m$. The $\mu_{\bar{p}}$-measure of each $B_m$ is at most $b^m \varepsilon_1$ because each length $N - m$ sequence extends to a length $N$ sequence in $b^m$ many ways, and we know $\mu_{\bar{p}}(B_0) \leq \varepsilon_1$. Thus the measure of $\bigcup_{m=0}^{r - 1} B_m$ is at most $\sum_{m=0}^{r - 1} b^m \varepsilon_1 \leq b^r \varepsilon_1$.
	
	We compute an upper bound on the eventual frequency of $v$ in $(x)_b$. Let $\varepsilon > 0$. Parse $(x)_b$ in length $N$ subblocks starting from index $0$, where $N$ will be sufficiently large as will be determined by the following analysis. Because $x$ is biased simply normal in base $b^N$, there is a positive integer $\ell_0$ such that for all $\ell \geq \ell_0$, every $w \in b^N$ occurs within $\varepsilon$ of its expected frequency in the first $\ell$ digits of $(x)_{b^N}$. That is,
	\begin{equation*}
		\abs{\frac{\occ((x)_{b^N}[0 : \ell - 1], w)}{\ell} - \mu_{\bar{p_N^*}}(w)} \leq \varepsilon
	\end{equation*}
	for every $w \in b^N$, where $\bar{p_N^*} = (p_{N, 0}^*, \dotsc, p_{N, b^N - 1}^*)$. Parsing $(x)_{b}$ in length $N$ blocks, instances of $v$ in $(x)_{b}$ can occur in three different ways. If an instance of $v$ is not contained within a length $N$ block when parsing $(x)_b$ into length $N$ subblocks starting from index $0$, then $v$ begins in one block and ends in the next block. All other instances of $v$ will be entirely within one length $N$ subblock, and we say such a block $w$ is ``good'' if $\abs{\frac{\occ(w, v)}{N} - \mu_{\bar{p}}(v)} \leq \varepsilon$, or ``bad'' otherwise. If an instance of $v$ is contained in a length $N$ block $w$, then we consider separately the cases that the block is good or bad.
	
	Let $\varepsilon > 0$. There are $\frac{\ell (r - 1)}{N}$ many length $r$ blocks that start in one length $N$ block and end in another length $N$ block. Some of those $\frac{\ell (r - 1)}{N}$ blocks could be instances of $v$, and none of them are counted in the above computation. Assume that all $\frac{\ell (r - 1)}{N}$ of these blocks are instances of $v$. Since $N$ is made arbitrarily large, $\frac{\ell (r - 1)}{N} < \varepsilon \ell$.
	
	Next, we bound the occurrences of $v$ in bad length $N$ subblocks. By Lemma \ref{lem:badblocks}, the subset $B$ of bad length $N$ blocks has $\mu_{\bar{p}}$-measure at most $2e^{-2\varepsilon^2 N}$. Since $N$ is made arbitrarily large, we can assume $2e^{-2\varepsilon^2 N} \leq \varepsilon$. Assume every bad length $N$ block has $N - r + 1$ occurrences of $v$, the maximum possible number of occurrences. By the choice of $\ell$, the number of digits in $(x)_{b^N}[0 : \ell - 1]$ which are bad base $b$ length $N$ blocks is at most $\varepsilon \ell$. We are assuming each of these bad blocks contains $N - r + 1$ instances of $v$, so the number of instances of $v$ in bad blocks is at most $\varepsilon (N - r + 1)\ell$.
	
	Similarly, let $G$ be the set of length $N$ good blocks. There are at most $\ell$ many elements of $G$ among the digits of $(x)_{b^N}[0 : \ell - 1]$. In a good block, the frequency of $v$ is within $\varepsilon$ of its expected frequency. The number of instances of $v$ in good blocks is at most $\ell(\mu_{\bar{p}}(v) + \varepsilon)(N - r + 1)$.
	
	We have counted the instances of $v$ in $(x)_b[0 : N \ell - 1]$ between two length $N$ blocks, inside bad blocks, and inside good blocks. Now we can compute an upper bound on the frequency of $v$ in the first $N \ell$ digits of $(x)_b$. We have
	\begin{gather*}
		\frac{\occ((x)_b[0 : N \ell - 1], v)}{N \ell} \leq \frac{\varepsilon \ell + \varepsilon(N - r + 1)\ell + \ell(\mu_{\bar{p}}(v) + \varepsilon)(N - r + 1)}{N \ell}
		\intertext{by above. Additionally,}
		\frac{\varepsilon \ell + \varepsilon(N - r + 1)\ell + \ell(\mu_{\bar{p}}(v) + \varepsilon)(N - r + 1)}{N \ell} = \frac{\varepsilon + \varepsilon(N - r + 1) + (\mu_{\bar{p}}(v) + \varepsilon)(N - r + 1)}{N}
		\intertext{and since $N - r + 1 \leq N$,}
		\frac{\varepsilon + \varepsilon(N - r + 1) + (\mu_{\bar{p}}(v) + \varepsilon)(N - r + 1)}{N} \leq \frac{\varepsilon + \varepsilon N + (\mu_{\bar{p}}(v) + \varepsilon) N}{N} = \frac{\varepsilon}{N} + 2\varepsilon + \mu_{\bar{p}}(v).
		\intertext{Therefore}
		\frac{\occ((x)_b[0 : N \ell - 1])}{N \ell} \leq \frac{\varepsilon}{N} + 2\varepsilon + \mu_{\bar{p}}(v)
	\end{gather*}
	which approaches $\mu_{\bar{p}}(v)$ as required. The computation for a lower bound on the eventual frequency of $v$ in $(x)_{b}$ can be made in a way analogous to the computation above; again parsing $(x)_b$ in length $N$ subblocks, assume that all occurrences of $v$ are within good length $N$ blocks. By Lemma \ref{lem:badblocks}, there are at least $(1 - \varepsilon)\ell$ many good length $N$ blocks when $\ell$ is sufficiently large. Each good length $N$ block must contain at least $(N - r + 1)(\mu_{\bar{p}}(v) - \varepsilon)$ instances of $v$. Then the number of occurrences of $v$ is at least $(1 - \varepsilon) \ell (N - r + 1)(\mu_{\bar{p}}(v) - \varepsilon)$, and one can check that the frequency of $v$ in $(x)_b[0 : N \ell - 1]$ again approaches $\mu_{\bar{p}}(v)$ as required.
\end{proof}

\begin{lemma}\label{lem:biasednormalityalt2}
	If $x$ is such that for every positive integer $r$ and every block $v \in b^r$,
	\begin{equation*}
		\lim_{n \to \infty} \frac{\occ((x)_b[0 : n - 1], v)}{n} = \prod_{j=0}^{r - 1} p_{v[j]} = \mu_{\bar{p}}(v)
	\end{equation*}
	then $x$ is biased normal as in Definition \ref{def:biasednormal}.
\end{lemma}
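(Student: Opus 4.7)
My plan is to reduce biased normality of $b^n x$ for each pair $(n,k)$ to a phase-$0$ non-overlapping parsing statement via a shift, then prove that statement by a window-counting argument driven by Lemma \ref{lem:badblocks}.

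For the reduction, I fix $n$ and $k$ and note that $b^n x$ being biased simply normal to $\bar{p_k^*}$ is exactly the assertion that the sequence $y = (x)_b[n], (x)_b[n+1], \dotsc$, parsed into non-overlapping length-$k$ blocks starting at index $0$, gives each $v \in b^k$ with density $\mu_{\bar p}(v) = p_{k, v^*}^*$. Since a finite shift does not change limiting block frequencies, $y$ still satisfies the hypothesis of the lemma. So it suffices to prove that for any $y \in b^\omega$ whose length-$r$ overlapping block frequencies all equal $\mu_{\bar p}$, any $k$, and any $v \in b^k$,
\begin{equation*}
    \lim_{M \to \infty} \frac{1}{M}\,\card{\{i < M : y[ik : ik + k - 1] = v\}} = \mu_{\bar p}(v).
\end{equation*}

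To establish this, fix $v$ and $0 < \varepsilon < \min_{w \in b^k} \mu_{\bar p}(w)$. For an integer $L$ to be chosen, call a length-$Lk$ block $u \in b^{Lk}$ \emph{good} if the phase-$0$ count $\card{\{i < L : u[ik : ik + k - 1] = v\}}$ differs from $L \mu_{\bar p}(v)$ by less than $\varepsilon L$, and \emph{bad} otherwise. Reinterpreting $u$ as a length-$L$ word in the alphabet $b^k$ with Bernoulli biases $\bar{p_k^*}$ (which identifies $\mu_{\bar p}$ on $b^{Lk}$ with $\mu_{\bar{p_k^*}}$ on $(b^k)^L$), Lemma \ref{lem:badblocks} gives that the $\mu_{\bar p}$-measure of the bad set is at most $2 e^{-2 \varepsilon^2 L}$. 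I choose $L$ large enough that $2 e^{-2 \varepsilon^2 L} \cdot Lk < \varepsilon$.

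By the hypothesis, the overlapping density in $y$ of each length-$Lk$ block $u$ equals $\mu_{\bar p}(u)$, so the overlapping density of the bad set in $y$ is at most $2 e^{-2 \varepsilon^2 L}$. Parse $y[0 : MLk - 1]$ into $M$ non-overlapping windows $U_j = y[jLk : (j+1)Lk - 1]$; since the bad non-overlapping windows form a sub-collection of all bad overlapping starting positions in $y[0:MLk-1]$, the number of bad $U_j$ is at most $2 e^{-2 \varepsilon^2 L} \cdot MLk + o(M) < \varepsilon M$ for $M$ large. Each good $U_j$ contributes a count of $v$ in $[L(\mu_{\bar p}(v) - \varepsilon), L(\mu_{\bar p}(v) + \varepsilon)]$ and each bad $U_j$ contributes at most $L$, so summing over $j$ and dividing by $ML$ places the phase-$0$ density of $v$ in $y[0 : MLk - 1]$ within $O(\varepsilon)$ of $\mu_{\bar p}(v)$. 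Extending from multiples of $L$ to arbitrary lengths introduces an error of $O(L/M') \to 0$, and letting $\varepsilon \to 0$ finishes the argument.

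The main obstacle I foresee is the overlapping-to-non-overlapping comparison: the non-overlapping windows occupy only a $1/(Lk)$ fraction of all length-$Lk$ starting positions, so the bound on the bad-set measure must beat the factor $Lk$. Lemma \ref{lem:badblocks}'s exponential-in-$L$ decay overwhelms this linear factor for $L$ chosen after $\varepsilon$ is fixed, so the order of quantifiers — choose $\varepsilon$, then $L$, then $M$ — is essential.
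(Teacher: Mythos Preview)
Your argument is correct and uses the same ingredient (Lemma~\ref{lem:badblocks}, applied in base $b^k$) as the paper, but the mechanism for passing from overlapping to non-overlapping counts is genuinely different. The paper follows Cassels: for a length-$r$ block $v$ and residue $m$ it sets $i_m = (s-r+1)\,R_m((x)_b[0:N-1],v)$ and $j_m = \sum_{t=0}^{N-s} R_{m-t}((x)_b[t:t+s-1],v)$, shows $\abs{i_m - j_m} \leq 2s^2$ by noting that each phase-$m$ occurrence of $v$ is counted $s-r+1$ times in $j_m$, and then bounds $j_m$ directly from the hypothesis since it is a sum over \emph{all} overlapping length-$s$ windows. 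You instead parse into non-overlapping length-$Lk$ windows and use the crude inclusion ``bad non-overlapping $\subseteq$ bad overlapping,'' which costs a factor of $Lk$ (there are $MLk$ overlapping starting positions but only $M$ non-overlapping windows). The Cassels double-counting is sharper in that it would work with any bad-set bound of size $\varepsilon$, whereas your argument needs the bad measure to beat $1/(Lk)$; but since Lemma~\ref{lem:badblocks} gives exponential decay in $L$, your chosen quantifier order ($\varepsilon$, then $L$, then $M$) absorbs this easily, and you avoid the $i_m/j_m$ bookkeeping entirely.
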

\begin{proof}
	This proof is similar to a proof by Cassels in \cite{cassels} for the case of normality, and we use similar notation. Let $f$ and $g$ be base $b$ blocks of lengths $r$ and $s$ respectively, with $s \geq r$. For a given integer $m$ from $0$ to $r - 1$, $R_m(g, f)$ is the number of solutions to $g[n : n + r - 1] = f$ with $n \equiv m\ (\mathrm{mod}\ r)$. Then $R_m(g, f) \leq s - r + 1$.
	
	Let $\varepsilon > 0$ and fix a block $v$ in base $b$ of length $r$. Let $s \geq r$ be a positive integer. Consider $v$ as a digit in base $b^r$. Let $B$ be the set of length $s$ base $b$ blocks with simple discrepancy at least $\varepsilon$. By Lemma \ref{lem:badblocks}, we have
	\begin{equation}\label{eqn:cassels1}
		\max_{0 \leq m < r} \abs{R_m(w, v) - \frac{(s - r + 1)\mu_{\bar{p}}(v)}{r}} < \varepsilon (s - r + 1)
	\end{equation}
	for all $w \in b^s$, except for a subset $B \subseteq b^s$ of length $s$ blocks which has Bernoulli measure at most $2e^{-2\varepsilon^2 s}$. For sufficiently large $s$, the Bernoulli measure of $B$ is less than $\varepsilon$. Because $(x)_b$ has the expected frequency of occurrences of length $s$ blocks, there exists $N$ such that the number of occurrences of blocks from $B$ in the first $N - s + 1$ digits in $x$ is at most $2 \varepsilon N$. For each $m$, let
	\begin{align*}
		i_m &= (s - r + 1) R_m((x)_b [0 : N - 1], v)\\
		j_m &= \sum_{t=0}^{N - s} R_{m-t}((x)_b [t : t + s - 1], v)
	\end{align*}
	Each occurrence of $v$ in $x$ at a starting index $n \equiv m\ \mathrm{mod}\ r$ contributes $s - r + 1$ to $i_m$. The same holds for $j_m$, except for occurrences of $v$ which start in $x$ at an index from $0$ to $s - 2$ or from $N - s - 3$ to $N - 1$, which contribute less than $s - r + 1$ to $j_m$. Then for each $m$, $\abs{i_m - j_m} \leq 2(s - 1)(s - r + 1) \leq 2s^2$.
	
	Each of the $2 \varepsilon N$ blocks appearing in $(x)_{b}[0 : N - 1]$ from $B$ contribute at most $s - r + 1$ occurrences of $v$. For length $s$ blocks appearing in $(x)_{b}[0 : N - 1]$ which are not members of $B$, $v$ appears at starting indices equivalent to $m\ \mathrm{mod}\ r$ with frequency at most $\frac{\mu_{\bar{p}}(v) + \varepsilon}{r}$ by equation \ref{eqn:cassels1}, so the number of these occurrences of $v$ in such length $s$ blocks is at most $\frac{(\mu_{\bar{p}}(v) + \varepsilon)(s - r + 1)}{r}$. There are at most $N - s + 1$ length $s$ blocks. This gives the upper bound 
	\begin{equation*}
		j_m \leq 2 \varepsilon N (s - r + 1) + (N - s + 1) (\mu_{\bar{p}}(v) + \varepsilon)(s - r + 1)
	\end{equation*}
	for each $m$. Then an upper bound on $\frac{j_m}{s - r + 1}$ is
	\begin{equation*}
		\frac{j_m}{s - r + 1} \leq 2 \varepsilon N + \frac{(N - s + 1)\mu_{\bar{p}}(v)}{r} + \varepsilon(N - s + 1)
	\end{equation*}
	for each $m$, where, to match the bounds given by Cassels, we have used the fact that $\frac{\varepsilon}{r} \leq \varepsilon$. Note that
	\begin{gather*}
		\abs{\frac{i_m}{s - r + 1} - \frac{j_m}{s - r + 1}} \leq \frac{2s^2}{s - r + 1}
		\intertext{and}
		\frac{i_m}{s - r + 1} = R_m((x)_b [0 : N - 1], v)
		\intertext{since $\abs{i_m - j_m} \leq 2s^2$ and by definition of $i_m$. Thus}
		\abs{R_m((x)_b [0 : N - 1], v) - \frac{(N - s + 1) \mu_{\bar{p}(v)}}{r}} \leq \frac{2s^2}{s - r + 1} + \varepsilon(N - s + 1) + 2 \varepsilon N
	\end{gather*}
	and
	\begin{equation*}
		\limsup_{N \to \infty} \abs{\frac{R_m ((x)_b [0 : N - 1], v)}{N} - \frac{\mu_{\bar{p}}(v)}{r}} \leq 3 \varepsilon.
	\end{equation*}
	Since $\varepsilon$ is arbitrarily small, we therefore have
	\begin{equation*}
		\lim_{N \to \infty} \frac{R_m((x)_b [0 : N - 1], v)}{N} = \frac{\mu_{\bar{p}}(v)}{r}
	\end{equation*}
	for each $m$ from $0$ to $r - 1$. Conclude that $x$ is biased normal as in Definition \ref{def:biasednormal}.
\end{proof}

Together, Lemmas \ref{lem:biasednormalityalt0}, \ref{lem:biasednormalityalt1}, and \ref{lem:biasednormalityalt2} prove the following corollary.

\begin{corollary}\label{cor:equivdefsbiased}
	Let $x$ be a real number. Fix a base $b$ and densities $p_0, \dotsc, p_{b-1}$. The following are equivalent.
	\begin{enumerate}[(1)]
		\item $x$ is biased normal as in Definition \ref{def:biasednormal}.
		\item For every positive integer $k$, $x$ is biased simply normal to $p_{k, 0}^*, p_{k, 1}^*, \dotsc, p_{k, b^k - 1}^*$, where for each $i \in \{0, \dotsc, b^k - 1\}$,
		\begin{equation*}
			p_{k, i}^* = \prod_{j=0}^{k - 1} p_{(i)_{b}[j]}
		\end{equation*}
		\item For each positive integer $r$ and for each $v \in b^{r}$,
		\begin{equation*}
		\lim_{n \to \infty} \frac{\occ((x)_b[0 : n - 1], v)}{n} = \prod_{j=0}^{r - 1} p_{v[j]} = \mu_{\bar{p}}(v)
		\end{equation*}
	\end{enumerate}
\end{corollary}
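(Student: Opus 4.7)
The plan is to observe that the three lemmas immediately preceding the corollary arrange themselves into a cycle of implications on the stated conditions: Lemma \ref{lem:biasednormalityalt0} supplies $(1) \Rightarrow (2)$, Lemma \ref{lem:biasednormalityalt1} supplies $(2) \Rightarrow (3)$, and Lemma \ref{lem:biasednormalityalt2} supplies $(3) \Rightarrow (1)$. The corollary is therefore just the transitive closure of this cycle, and the body of the proof is a one-sentence invocation of each lemma; no fresh argument is needed.

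More concretely, I would unpack the cycle as follows. For $(1) \Rightarrow (2)$, note that condition $(2)$ is the $n = 0$ specialization of Definition \ref{def:biasednormal}, which is precisely what Lemma \ref{lem:biasednormalityalt0} records. For $(2) \Rightarrow (3)$, cite Lemma \ref{lem:biasednormalityalt1}: its role is to reconcile the fact that biased simple normality of $x$ in base $b^k$ only controls blocks parsed in length-$k$ windows starting at index $0$, while condition $(3)$ asks for block frequencies at every starting offset. The discrepancy bound from Lemma \ref{lem:badblocks} is what bridges this gap. For $(3) \Rightarrow (1)$, cite Lemma \ref{lem:biasednormalityalt2}, which carries out the Cassels-style passage from uniform block frequencies in $(x)_b$ to biased simple normality of each shift $b^n x$ in each block base $b^k$.

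The real mathematical content lives entirely inside Lemmas \ref{lem:biasednormalityalt1} and \ref{lem:biasednormalityalt2}, so at the level of the corollary there is no remaining obstacle beyond bookkeeping. If I wanted to present the proof as tersely as possible I would simply write ``Immediate from Lemmas \ref{lem:biasednormalityalt0}, \ref{lem:biasednormalityalt1}, and \ref{lem:biasednormalityalt2},'' echoing the sentence the author already uses to introduce the statement.
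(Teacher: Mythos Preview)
Your proposal is correct and matches the paper's approach exactly: the paper gives no separate proof for the corollary, simply stating that Lemmas \ref{lem:biasednormalityalt0}, \ref{lem:biasednormalityalt1}, and \ref{lem:biasednormalityalt2} together yield the cycle $(1)\Rightarrow(2)\Rightarrow(3)\Rightarrow(1)$. Your terse version, ``Immediate from Lemmas \ref{lem:biasednormalityalt0}, \ref{lem:biasednormalityalt1}, and \ref{lem:biasednormalityalt2},'' is essentially what the author writes in the sentence introducing the statement.
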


\begin{theorem}\label{thm:bernoullinormal}
	Let $x$ be a Bernoulli random real, with biases $p_0, p_1, \dotsc, p_{n - 1}$. Then $x$ is biased normal with respect to $p_0, p_1, \dotsc, p_{n - 1}$.
\end{theorem}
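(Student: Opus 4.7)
My plan is to reduce the theorem, via Corollary \ref{cor:equivdefsbiased}, to proving condition (2): that for every positive integer $k$, a Bernoulli-random $x$ is biased simply normal in base $b^k$ to $\bar{p_k^*} = (p_{k, 0}^*, \dotsc, p_{k, b^k - 1}^*)$. The key point is that $(x)_{b^k}[j]$ is determined by the consecutive length-$k$ block $(x)_b[jk : jk + k - 1]$, and under $\mu_{\bar{p}}$ these non-overlapping blocks are mutually independent with each equalling any fixed $w \in b^k$ with probability $\mu_{\bar{p}}(w) = p_{k, d}^*$, where $d$ is the base-$b^k$ digit corresponding to $w$. Condition (2) is therefore a strong law of large numbers for i.i.d.\ indicator variables, which I extract from randomness by building an appropriate Martin-Löf test.

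Fix $k$, $d \in \{0, \dotsc, b^k - 1\}$, and a positive rational $\varepsilon$. Applying Lemma \ref{lem:badblocks} to the Bernoulli measure $\mu_{\bar{p_k^*}}$ on $(b^k)^{\omega}$ bounds the $\mu_{\bar{p}}$-measure of the clopen set
\begin{equation*}
	B_N^{k, d, \varepsilon} = \left\{ x \in b^{\omega} : \abs{\occ((x)_{b^k}[0 : N - 1], d) - p_{k, d}^* N} \geq \varepsilon N \right\}
\end{equation*}
by $2 e^{-2 \varepsilon^2 N}$. Choose $N_i = N_i(k, d, \varepsilon)$ computably so that $\sum_{N \geq N_i} 2 e^{-2 \varepsilon^2 N} \leq 2^{-i}$ and set
\begin{equation*}
	\mathcal{U}_i^{k, d, \varepsilon} = \bigcup_{N \geq N_i} B_N^{k, d, \varepsilon}.
\end{equation*}
Each $B_N^{k, d, \varepsilon}$ is a finite union of length-$Nk$ cylinders enumerable from $(k, d, \varepsilon, N)$, so $(\mathcal{U}_i^{k, d, \varepsilon})_{i \in \omega}$ is a $\mu_{\bar{p}}$-Martin-Löf test uniformly in the triple $(k, d, \varepsilon)$.

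I would then amalgamate these countably many tests into a single one: fix any computable enumeration $(k_e, d_e, \varepsilon_e)_{e \in \omega}$ of admissible triples and set $\mathcal{V}_i = \bigcup_{e \in \omega} \mathcal{U}_{i + e + 1}^{k_e, d_e, \varepsilon_e}$, which gives a single $\mu_{\bar{p}}$-Martin-Löf test with $\mu_{\bar{p}}(\mathcal{V}_i) \leq \sum_e 2^{-(i + e + 1)} = 2^{-i}$ (relative to whatever oracle is needed to compute $\bar{p}$). Since $x$ is Bernoulli random it passes $(\mathcal{V}_i)$, so for every $k$, every $d$, and every positive rational $\varepsilon$ the frequency of $d$ in the first $N$ consecutive length-$k$ blocks of $(x)_b$ is eventually within $\varepsilon$ of $p_{k, d}^*$. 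Letting $\varepsilon$ range over positive rationals yields the required exact limit, establishing condition (2) of Corollary \ref{cor:equivdefsbiased}, from which biased normality follows.

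The analytic content of the proof is entirely supplied by Lemma \ref{lem:badblocks}; the only obstacle is the bookkeeping needed to present each $\mathcal{U}_i^{k, d, \varepsilon}$ as a uniformly c.e.\ family and to diagonally amalgamate countably many such tests. Both steps are routine once $N_i(k, d, \varepsilon)$ is given by an explicit geometric-series estimate.
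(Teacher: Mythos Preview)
Your proposal is correct and follows the same strategy as the paper: use Lemma~\ref{lem:badblocks} to bound the $\mu_{\bar p}$-measure of tail unions of ``bad'' prefixes, present these as $\mu_{\bar p}$-Martin-L\"of tests, and appeal to Corollary~\ref{cor:equivdefsbiased}. Your write-up is in fact more careful than the paper's---the paper sketches the test only for a single $\varepsilon$ and for base-$b$ digits, leaving the quantification over $k$ implicit---and your diagonal amalgamation into a single test $(\mathcal V_i)$, while correct, is unnecessary, since a $\mu_{\bar p}$-random real already passes each test $(\mathcal U_i^{k,d,\varepsilon})_i$ individually.
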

\begin{proof}
	We will construct a $\mu_{\bar{p}}$-Martin-L\"of test. Let $0 < \varepsilon < \min(p_0, \dotsc, p_{b-1})$. Let $k_0$ be the least such that Lemma \ref{lem:badblocks} holds for $\varepsilon$ and $b$. For each integer $k \geq k_0$, let 
	\begin{equation*}
		B_k = \bigcup_{N > k} \{w \in b^N : \text{$\abs{\occ(w, d) - p_d} > \varepsilon N$ for some digit $d$ in base $b$}\}		
	\end{equation*}
	Then
	\begin{equation*}
		\mu_{\bar{p}}(B_k) \leq \sum_{N > k} 2e^{-2 \varepsilon^2 N} \leq \int_{k}^{\infty} 2 e^{-2\varepsilon^2 N} dN = \frac{e^{-2 \varepsilon^2 k}}{\varepsilon^2}
	\end{equation*}
	Suppose $x$ is not biased normal to the densities $p_0, \dotsc, p_{b-1}$. By Corollary \ref{cor:equivdefsbiased}, $x$ is equivalently not biased simply normal to base $b^n$ for some positive integer $n$ and densities $p_{n, 0}^*, \dotsc, p_{n, b^n - 1}^*$ as defined in Corollary \ref{cor:equivdefsbiased}. Then $x \in \bigcap_{k \geq k_0} B_k$, and $x$ fails the $\mu_{\bar{p}}$-Martin-L\"of-random test.
\end{proof}

\begin{corollary}
	Fixing densities $p_0, p_1, \dotsc, p_{b-1}$, the set of biased normal reals has Bernoulli measure $1$.
\end{corollary}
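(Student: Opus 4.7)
The plan is to derive this corollary directly from Theorem \ref{thm:bernoullinormal} together with the standard measure-theoretic fact that the set of $\mu$-Martin-L\"of random reals has $\mu$-measure $1$ for any computable probability measure $\mu$ on $n^{\omega}$.

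First, I would establish (or cite) that the set of $\mu_{\bar{p}}$-Martin-L\"of random reals in $b^{\omega}$ has $\mu_{\bar{p}}$-measure $1$. The key observation is that for any fixed $\mu_{\bar{p}}$-Martin-L\"of test $(\mathcal{U}_i)_{i \in \omega}$, one has
\begin{equation*}
\mu_{\bar{p}}\!\left(\bigcap_{i \in \omega} \mathcal{U}_i\right) \leq \inf_{i} \mu_{\bar{p}}(\mathcal{U}_i) \leq \inf_{i} 2^{-i} = 0,
\end{equation*}
so the set of reals failing that particular test is $\mu_{\bar{p}}$-null. Since there are only countably many Martin-L\"of tests (indexed by the computable enumerations defining them), the set of reals failing at least one such test is a countable union of $\mu_{\bar{p}}$-null sets and hence $\mu_{\bar{p}}$-null. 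Its complement, the set of Bernoulli random reals with parameters $\bar{p}$, therefore has $\mu_{\bar{p}}$-measure $1$.

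Next I would invoke Theorem \ref{thm:bernoullinormal}, which shows that every Bernoulli random real with parameters $\bar{p}$ is biased normal with respect to $\bar{p}$. Writing $\mathcal{R}$ for the set of Bernoulli random reals and $\mathcal{N}$ for the set of biased normal reals, this gives the inclusion $\mathcal{R} \subseteq \mathcal{N}$. By monotonicity of $\mu_{\bar{p}}$ we conclude $\mu_{\bar{p}}(\mathcal{N}) \geq \mu_{\bar{p}}(\mathcal{R}) = 1$, and since $\mu_{\bar{p}}$ is a probability measure we get $\mu_{\bar{p}}(\mathcal{N}) = 1$, as desired.

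There is no real obstacle here, since all the substantive work has already been done in Theorem \ref{thm:bernoullinormal}; the only minor subtlety is making sure the reader is comfortable with the standard fact that the $\mu_{\bar{p}}$-random reals have full measure, which follows from the countable-union argument above and the fact that each individual test excludes only a null set. The corollary is thus a one-line consequence of the preceding theorem combined with this standard fact.
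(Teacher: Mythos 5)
Your argument is correct and is exactly the intended route: the paper omits the proof of this corollary precisely because it follows immediately from Theorem \ref{thm:bernoullinormal} together with the standard fact that the $\mu_{\bar{p}}$-Martin-L\"of random reals have full $\mu_{\bar{p}}$-measure (a countable union of null sets, one per test). Nothing to add.
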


As another corollary of Theorem \ref{thm:bernoullinormal}, we can prove Theorem \ref{thm:mlrandomabsolutely}.

\begingroup
\def\thetheorem{\ref{thm:mlrandomabsolutely}}
\begin{theorem}
	Every $\lambda$-Martin-L\"of random real is \textit{absolutely normal} --- normal in every base.
\end{theorem}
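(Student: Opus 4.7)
The plan is to read Theorem~\ref{thm:mlrandomabsolutely} as the special case of Theorem~\ref{thm:bernoullinormal} in which all biases are equal. Fix a base $b \geq 2$ and take $\bar{p} = (1/b, 1/b, \ldots, 1/b)$. As observed in the definition of the Bernoulli measure, $\mu_{\bar{p}}$ coincides with the Lebesgue measure $\lambda$ on $b^{\omega}$, so a $\lambda$-Martin-L\"of random sequence in $b^{\omega}$ is automatically Bernoulli random with these biases. Theorem~\ref{thm:bernoullinormal} then yields that any such sequence is biased normal with respect to $(1/b, \ldots, 1/b)$.

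Next I would translate biased normality under uniform biases into ordinary normality to base $b$. Using clause (3) of Corollary~\ref{cor:equivdefsbiased}, biased normality requires every block $v \in b^r$ to appear with asymptotic frequency $\prod_{j} p_{v[j]} = 1/b^r$, which is precisely the Niven--Zuckerman characterization of normality to base $b$ from Theorem~\ref{thm:nivenzuckerman}. (Alternatively, one can invoke Pillai's Theorem~\ref{thm:pillai} via clause (2), since $p_{k,i}^{*} = 1/b^k$ collapses biased simple normality to base $b^k$ into ordinary simple normality to base $b^k$.)

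To finish, I need the base-change step: if the real $x$ is $\lambda$-Martin-L\"of random, then $(x)_b$ is $\lambda$-Martin-L\"of random in $b^{\omega}$ for every base $b \geq 2$. This is a standard invariance result that exploits the fact that the map $[0,1] \to b^{\omega}$ sending a real to its base-$b$ expansion is a computable measure-preserving bijection outside a computable null set (the $b$-adic rationals), so Martin-L\"of tests pull back across base conversions without increasing measure. Combining this base-change invariance with the previous paragraph yields that $(x)_b$ is normal in every base $b$, i.e., $x$ is absolutely normal.

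The main obstacle is the base-change invariance step. The reduction to Theorem~\ref{thm:bernoullinormal} is essentially bookkeeping, but verifying that Martin-L\"of randomness survives the transition between different base expansions of the same real requires a careful analysis near the $b$-adic rationals; these exceptional points are themselves computable and so cannot be Martin-L\"of random in any representation, which is ultimately why the argument goes through.
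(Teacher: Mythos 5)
Your argument follows the same route as the paper's own proof: specialize Theorem~\ref{thm:bernoullinormal} to uniform biases $p_i = 1/b$, note that $\mu_{\bar{p}}$ is then exactly $\lambda$ on $b^{\omega}$, and observe (via clause (3) of Corollary~\ref{cor:equivdefsbiased} and Theorem~\ref{thm:nivenzuckerman}) that uniform biased normality is ordinary normality to base $b$. The one thing you do beyond the paper is to explicitly flag the base-change invariance step --- that $(x)_b$ is $\lambda$-Martin-L\"of random in $b^{\omega}$ for every base $b$ --- which the paper's proof leaves implicit; since the paper defines a $\mu$-Martin-L\"of test per fixed $n^{\omega}$ and the theorem quantifies over all bases, spelling out that randomness transfers across base expansions via the computable measure-preserving correspondence (away from the countable, effectively null set of $b$-adic rationals) is a genuine and welcome tightening rather than a different approach.
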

\addtocounter{theorem}{-1}
\endgroup

\begin{proof}
	Let $x$ be a $\lambda$-Martin-L\"of-random real. Let $b$ be any base, and let $\bar{p} = (p_0, p_1, \dotsc, p_{b-1})$ where $p_i = \frac{1}{b}$ for all $i$. Because the Bernoulli measure with parameters $\bar{p}$ is the Lebesgue measure, and $x$ is $\lambda$-Martin-L\"of-random, it follows that $x$ is Bernoulli random with parameters $\bar{p}$. By Theorem \ref{thm:bernoullinormal}, $x$ is biased normal with respect to $\bar{p}$. The parameters $\bar{p}$ are uniform, so equivalently, $x$ is normal to base $b$. Since $b$ was arbitrary, deduce that $x$ is absolutely normal.
\end{proof}

\section{Construction of Biased Normal Sequences}\label{sec:construction}
We present a simple algorithm for computing a biased normal sequence by using a normal sequence, but we must assume that the given probabilities are rational numbers.

\begin{construction}\label{def:biasednormalcomp}
	Let $p_0, p_1, p_2, \dotsc, p_{n-1}$ be positive rational probabilities adding up to $1$. For each $i \in \{0, 1, 2, \dotsc, n - 1\}$, let $p_i = \frac{a_i}{b_i}$, with $a_i, b_i$ being positive coprime integers. Let $d = \mathrm{lcm}(b_0, b_1, \dotsc, b_{n-1})$. Then there is a base $n$ block $g$ of length $d$ containing exactly $p_i d$ of each $i$, as $p_i d$ is an integer. Assume $g$ has the base $n$ digits in increasing order. Next, let $\nu \in d^{\omega}$ be base $d$ normal sequence. Construct the sequence $\beta \in n^{\omega}$ from $\nu$ by setting $\beta[k] = g[\nu[k]]$.
\end{construction}

\begin{example}
	Let $p_0 = \frac{2}{3}$ and $p_1 = \frac{1}{3}$. Then $d = 3$, and we can let $g = 001$. This means that for each $k \in \N$, $\beta[k]$ will be $0$ if $\nu[k]$ is $0$ or $1$, and $\beta[k]$ will be $1$ if $\nu[k]$ is $2$. If $\nu$ is Champernowne's base 3 sequence,
	\begin{align*}
	\nu &= 0121011122021221\dotsc
	\intertext{then $\beta$ begins}
	\beta &= 0010000011010110\dotsc
	\end{align*}
\end{example}

\begin{theorem}
	In Construction \ref{def:biasednormalcomp}, $\beta$ is biased normal with respect to $p_0, p_1, p_2, \dotsc, p_{n-1}$.
\end{theorem}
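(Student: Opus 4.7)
The plan is to invoke Corollary \ref{cor:equivdefsbiased}(3), which reduces biased normality to checking that every block $v$ of length $r$ over $\{0,\dotsc,n-1\}$ appears in $\beta$ with asymptotic frequency $\prod_{j=0}^{r-1} p_{v[j]}$. The construction $\beta[k] = g[\nu[k]]$ is memoryless and coordinatewise, so a length-$r$ occurrence of $v$ in $\beta$ starting at position $k$ corresponds exactly to a length-$r$ block $u \in d^r$ appearing in $\nu$ at position $k$ with $g[u[j]] = v[j]$ for every $j < r$.

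First I would define, for each digit $i \in \{0,\dotsc,n-1\}$, the preimage set $g^{-1}(i) = \{t < d : g[t] = i\}$ and observe that by construction $\card{g^{-1}(i)} = p_i d$. Hence the set of length-$r$ base-$d$ blocks that map under $g$ coordinatewise to $v$ has cardinality
\begin{equation*}
\prod_{j=0}^{r-1} \card{g^{-1}(v[j])} = d^r \prod_{j=0}^{r-1} p_{v[j]}.
\end{equation*}
Call this set $U_v \subseteq d^r$. Then for every $N$,
\begin{equation*}
\occ(\beta[0:N-1], v) = \sum_{u \in U_v} \occ(\nu[0:N-1], u),
\end{equation*}
because the map $k \mapsto (\nu[k],\dotsc,\nu[k+r-1])$ sorts each length-$r$ starting position into exactly one bucket $U_u$, and the ones landing in $U_v$ are precisely those giving a $v$-occurrence in $\beta$ (with a bounded boundary discrepancy of at most $r-1$ that is absorbed in the limit).

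Next, since $\nu$ is normal in base $d$, Theorem \ref{thm:nivenzuckerman} gives $\lim_{N\to\infty} \occ(\nu[0:N-1], u)/N = 1/d^r$ for each $u \in d^r$. Dividing the displayed identity by $N$, taking $N \to \infty$, and using that the sum over $U_v$ is finite, I get
\begin{equation*}
\lim_{N\to\infty} \frac{\occ(\beta[0:N-1], v)}{N} = \frac{\card{U_v}}{d^r} = \prod_{j=0}^{r-1} p_{v[j]} = \mu_{\bar{p}}(v),
\end{equation*}
which is exactly condition (3) of Corollary \ref{cor:equivdefsbiased}. Hence $\beta$ is biased normal with respect to $p_0,\dotsc,p_{n-1}$.

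There is no real obstacle here: the argument is essentially a pigeonhole/counting step that pushes normality forward through the symbol-substitution $g$. The only mild subtlety is ensuring that $\occ(\beta[0:N-1], v)$ really coincides with $\sum_{u\in U_v} \occ(\nu[0:N-1], u)$ up to an $O(r)$ boundary error, and that the frequencies for the finitely many $u \in U_v$ can be summed before taking the limit; both are routine. The key structural input that makes the count come out right is the fact, built into the construction, that $\card{g^{-1}(i)} = p_i d$ for each $i$, which is why rationality of the $p_i$'s and the choice $d = \mathrm{lcm}(b_0,\dotsc,b_{n-1})$ are used.
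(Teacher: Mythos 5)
Your proof is correct and takes essentially the same route as the paper: both pass through Corollary \ref{cor:equivdefsbiased}(3), define the preimage set of length-$r$ base-$d$ blocks (the paper's $A_w$, your $U_v$), compute its size as $d^r \mu_{\bar{p}}(v)$, split $\occ(\beta[0:N-1], v)$ over $U_v$, and apply Theorem \ref{thm:nivenzuckerman} to the normal sequence $\nu$. One small over-caution on your part: because $\beta[k] = g[\nu[k]]$ is a pointwise symbol substitution, a length-$r$ window in $\beta$ at position $k$ corresponds exactly to the length-$r$ window in $\nu$ at the same position $k$, so the identity $\occ(\beta[0:N-1],v) = \sum_{u \in U_v}\occ(\nu[0:N-1],u)$ is exact and there is no boundary discrepancy to absorb.
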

\begin{proof}
	Let $w \in n^{\ell}$. By Corollary \ref{cor:equivdefsbiased}, it is sufficient to show that $w$ has its expected frequency $\mu_{\bar{p}}(w)$ in $\beta$. Let $\nu$ be the base $d$ normal sequence used to construct $\beta$. We will rely on the normality of $\nu$. 
	
	Define $A_w$ to be the set of length $\ell$ blocks $u$ in base $d$ such that $g[u[i]] = w[i]$ for all $i$ from $0$ to $\ell - 1$. In other words, a block $u \in A_w$ appears starting at index $k$ in $\nu$ if and only if $w$ appears starting at index $k$ in $\beta$. The number of blocks in $A_w$ is
	\begin{equation*}
		\card{A_w} = \prod_{i=0}^{\ell - 1} (p_{w[i]} d) = d^{\ell} \prod_{i=0}^{\ell - 1} p_{w[i]} = d^{\ell} \mu_{\bar{p}}(w)
	\end{equation*}
	by construction of $g$. By normality of $\nu$ and Theorem \ref{thm:nivenzuckerman}, every base $d$ block $u$ of length $\ell$ appears with frequency $\frac{1}{d^{\ell}}$ in $\nu$.
	\begin{equation*}
		\lim_{k \to \infty} \frac{\occ(\nu[0 : k - 1], u)}{k} = \frac{1}{d^{\ell}}
	\end{equation*}
	Let $\varepsilon > 0$. Then there exists $k_0 \in \N$ such that for all $k \geq k_0$ and each $u \in d^{\ell}$,
	\begin{equation*}
		\abs{\frac{\occ(\nu[0 : k - 1], u)}{k} - \frac{1}{d^{\ell}}} < \varepsilon
	\end{equation*}
	Consider $k \geq k_0$. For each $u \in d^{\ell}$, let $\delta_u$ be such that $\abs{\delta_u} \leq \varepsilon$ and
	\begin{equation*}
		\frac{\occ(\nu[0 : k - 1], u)}{k} = \frac{1}{d^{\ell}} + \delta_u
	\end{equation*}
	By the construction of $\beta$, we can count instances of $w$ in $\beta$ in terms of instances of $u \in A_w$ appearing in $\nu$.
	\begin{align*}
		\occ(\beta[0 : k - 1], w) &= \sum_{u \in A_w} \occ(\nu[0 : k - 1], u)
		\intertext{Then}
		\frac{\occ(\beta[0 : k - 1], w)}{k} &= \sum_{u \in A_w} \frac{\occ(\nu[0 : k - 1], u)}{k}
		\intertext{and by above,}
		\frac{\occ(\beta[0 : k - 1], w)}{k} &= \sum_{u \in A_w} \left(\frac{1}{d^{\ell}} + \delta_u \right)
	\end{align*}
	Since $\abs{\delta_u} \leq \varepsilon$, we then have
	\begin{gather*}
		\sum_{u \in A_w} \left(\frac{1}{d^{\ell}} - \varepsilon\right) < \frac{\occ(\beta[0 : k - 1], w)}{k} < \sum_{u \in A_w} \left(\frac{1}{d^{\ell}} + \varepsilon\right)
		\intertext{and we calculated $|A_w| = d^{\ell} \mu_{\bar{p}}(w)$, so}
		d^{\ell} \mu_{\bar{p}}(w) \left(\frac{1}{d^{\ell}} - \varepsilon\right) < \frac{\occ(\beta[0 : k - 1], w)}{k} < d^{\ell} \mu_{\bar{p}}(w) \left(\frac{1}{d^{\ell}} + \varepsilon\right)\\
		\mu_{\bar{p}}(w) - \varepsilon d^{\ell} \mu_{\bar{p}}(w) < \frac{\occ(\beta[0 : k - 1], w)}{k} < \mu_{\bar{p}}(w) + \varepsilon d^{\ell} \mu_{\bar{p}}(w)
		\intertext{Thus}
		\abs{\frac{\occ(\beta[0 : k - 1], w)}{k} - \mu_{\bar{p}}(w)} < \varepsilon d^{\ell} \mu_{\bar{p}}(w)
		\intertext{Since $\varepsilon$ is arbitrarily small and $d^{\ell} \mu_{\bar{p}}(w)$ is constant, deduce that}
		\lim_{k \to \infty} \frac{\occ(\beta[0 : k - 1], w)}{k} = \mu_{\bar{p}}(w)
	\end{gather*}
	and that, by Corollary \ref{cor:equivdefsbiased}, $\beta$ is biased normal with respect to the probabilities.
\end{proof}

Because the translation described in Construction \ref{def:biasednormalcomp} is measure-preserving, computable, and continuous, we have the following theorem.

\begin{theorem}
	Let $x$ be a $\lambda$-Martin-L\"of-random real, let $b$ be a base, and let $p_0, p_1, \dotsc, p_{b-1}$ be rational densities. Let $\beta$ be the result of running Construction \ref{def:biasednormalcomp} on $(x)_b$. Then $\beta$ is Bernoulli random with parameters $p_0, p_1, \dotsc, p_{b-1}$.
\end{theorem}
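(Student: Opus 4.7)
The plan is to view Construction \ref{def:biasednormalcomp} as a map $f : d^{\omega} \to b^{\omega}$ defined coordinatewise by $f(\nu)[k] = g[\nu[k]]$, and to show that preimages under $f$ of $\mu_{\bar{p}}$-Martin-L\"of tests on $b^{\omega}$ are $\lambda$-Martin-L\"of tests on $d^{\omega}$. Then if $\beta = f((x)_d)$ failed some $\mu_{\bar{p}}$-Martin-L\"of test $(U_i)_{i \in \omega}$, the sequence $(f^{-1}(U_i))_{i \in \omega}$ would be a $\lambda$-Martin-L\"of test that $(x)_d$ fails, contradicting $\lambda$-Martin-L\"of randomness of $x$. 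I read the ``$(x)_b$'' in the theorem statement as the base-$d$ representation $(x)_d$, since that is the input type of the construction, and Martin-L\"of randomness of a real number is independent of the base used to represent it.

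For continuity and computability of $f$, the first $k$ digits of $f(\nu)$ are determined by the first $k$ digits of $\nu$ via the fixed finite lookup into $g$, so $f$ is continuous and computable. For measure-preservation, I would check on basic cylinders: given $w \in b^{\ell}$, the set $A_w$ defined in the proof of the previous theorem satisfies
\begin{equation*}
f^{-1}(\dbracket{w}) = \bigcup_{u \in A_w} \dbracket{u},
\end{equation*}
hence
\begin{equation*}
\lambda(f^{-1}(\dbracket{w})) = \frac{\card{A_w}}{d^{\ell}} = \mu_{\bar{p}}(w)
\end{equation*}
using the count $\card{A_w} = d^{\ell} \mu_{\bar{p}}(w)$ already established. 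Since $\lambda \circ f^{-1}$ and $\mu_{\bar{p}}$ agree on the generating algebra of cylinders, they agree on all Borel subsets of $b^{\omega}$.

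Given a $\mu_{\bar{p}}$-Martin-L\"of test $(U_i)_{i \in \omega}$ on $b^{\omega}$, I would form $(f^{-1}(U_i))_{i \in \omega}$ on $d^{\omega}$. Uniform computable enumerability transfers because the preimage of each basic cylinder $\dbracket{w} \subseteq b^{\omega}$ is a finite union of basic cylinders in $d^{\omega}$ computable uniformly in $w$, and the bound $\lambda(f^{-1}(U_i)) = \mu_{\bar{p}}(U_i) \leq 2^{-i}$ follows from measure-preservation. Since $x$ is $\lambda$-Martin-L\"of random, $(x)_d$ passes this pulled-back test, so $\beta = f((x)_d)$ passes $(U_i)$. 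As the test was arbitrary, $\beta$ is $\mu_{\bar{p}}$-Martin-L\"of random, i.e., Bernoulli random with parameters $\bar{p}$.

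The main obstacle is bookkeeping rather than any genuinely new idea: checking that $f^{-1}$ sends uniformly c.e. sequences of open sets to uniformly c.e. sequences of open sets, and that the measure identity extends from cylinders to all Borel sets. Both are routine once one observes that $f$ acts at the level of finite prefixes, and the combinatorial count $\card{A_w} = d^{\ell} \mu_{\bar{p}}(w)$ needed for the measure computation is already done in the preceding theorem.
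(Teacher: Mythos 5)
Your proof is correct and fills in the details of exactly the argument the paper sketches: the paper offers no proof beyond the preceding sentence asserting that Construction \ref{def:biasednormalcomp} is measure-preserving, computable, and continuous, and your pullback-of-tests argument is the standard way to turn that observation into Bernoulli randomness of $\beta$. Your reading of $(x)_b$ as $(x)_d$ is also right, since the construction inputs a base-$d$ sequence and $\lambda$-Martin-L\"of randomness is base-invariant.
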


\section{Application: Iterated Function Systems}\label{sec:applications}
In his book \textit{Fractals Everywhere} \cite{barnsley} on the theory of iterated function systems, Michael Barnsley presents two algorithms for computing the attractor of an IFS. The first ``deterministic algorithm'' constructs the attractor directly in iterated steps. The second ``random iteration algorithm'' (or ``chaos game'') plots hundreds of thousands of points, where each point is the image of a randomly selected transformation on the previous point, and the collection of points approximates the attractor of the IFS. In particular, Barnsley uses a computer's pseudorandom number generator to select the transformations. A famous attractor of an IFS is the Barnsley fern and is shown in Figure \ref{fig:barnsley1}.

\begin{figure}[h]
	\centering
	\includegraphics[width=0.5\textwidth]{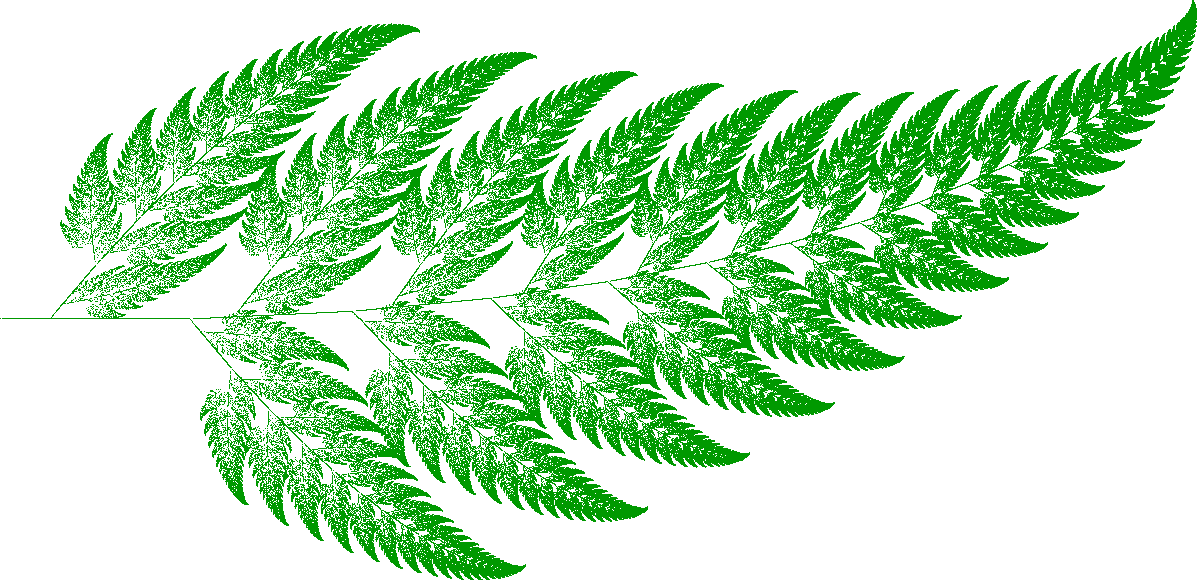}
	\caption{The Barnsley fern.}
	\label{fig:barnsley1}
\end{figure}

We begin by reintroducing iterated function systems (with probabilities) and the random iteration algorithm.

\subsubsection{An Note on Illustrations}
The illustrations appearing in this paper are the output of a program written in Processing by the author. It is important to note now that the illustrations are of plots in Cartesian coordinates, but with the convention that the origin $(0, 0)$ appears at the top-left of the image and with the $y$-axis increasing downwards rather than upwards. The $x$-axis increases to the right as usual. The source code for the program, including a Python version with a user interface, can be found at \cite{delapocode}.

\subsection{Iterated Function Systems}

\begin{definition}
	An \textit{iterated function system with probabilities} consists of a metric space $(X, d)$, a finite collection of transformations $f_1, f_2, \dotsc, f_n : X \to X$, and a corresponding collection of real probabilities $p_1, p_2, \dotsc, p_n$, where $0 < p_i < 1$ for all $i$, and $\sum_{i=1}^n p_i = 1$. An iterated function system with probabilities, often abbreviated IFS, is often presented as $\{X;\ f_1, f_2, \dotsc, f_n;\ p_1, p_2, \dotsc, p_n\}$. When the probabilities are omitted, one can assume that the probabilities are uniform, and $p_i = \frac{1}{n}$ for all $i$.
\end{definition}

\begin{definition}
	Let $(X, d)$ be a metric space. A transformation $f: X \to X$ is a \textit{contraction mapping} if there is a constant $0 \leq s < 1$ such that for all $x, y \in X$,
	\begin{equation*}
		d(f(x), f(y)) \leq s \cdot d(x, y)
	\end{equation*}
\end{definition}

\begin{definition}
	Let $\{X;\ w_1, w_2, \dotsc, w_n\}$ be an IFS where each $w_i$ is a contraction mapping. Barnsley calls such an IFS \textit{hyperbolic}. Let $\mathscr{H}(X)$ denote the space whose points are the compact subsets of $X$, not including the empty set. One can check (see \cite{barnsley}) that the transformation $W: \mathscr{H}(X) \to \mathscr{H}(X)$ defined by
	\begin{equation*}
		W(B) = \bigcup_{i=1}^n w_i(B)
	\end{equation*}
	has a unique fixed point $A \in \mathscr{H}(X)$; we have $W(A) = A$, and $A$ is given by
	\begin{equation*}
		A = \lim_{n \to \infty} W^n(B)
	\end{equation*}
	for any $B \in \mathscr{H}(X)$. Then $A$ is called the \textit{attractor} of the IFS.
\end{definition}

\begin{definition}
	One can use the \textit{random iteration algorithm} to approximate the attractor of an IFS $\{X;\ f_1, f_2, \dotsc, f_n;\ p_1, p_2, \dotsc, p_n\}$. The random iteration algorithm proceeds as follows.\\
		
	First, set $x_0 \in X$ arbitrarily. In cases where $X = \R^2$, we will set $x_0 = (0, 0)$. Next, for each $k \geq 1$, choose recursively and independently
	\begin{equation*}
		x_k \in \{f_1(x_{k-1}), f_2(x_{k-1}), \dotsc, f_n(x_{k-1})\}
	\end{equation*}
	where the probability that $x_k = f_i(x_{n-1})$ is $p_i$. The result of the random iteration algorithm is $\{x_n : n \in \N\} \subseteq X$. By ``randomly,'' Barnsley is referring to an unspecified level of randomness, but one that is at least as random as the pseudorandom number generator on a computer.
\end{definition}

\begin{example}\label{ex:sierpinski1}
	In $\R^2$, consider the three transformations
	\begin{align*}
		f_1(x, y) &= \left(\frac{x}{2}, \frac{y}{2}\right)\\
		f_2(x, y) &= \left(\frac{x}{2}, \frac{y + 100}{2}\right)\\
		f_3(x, y) &= \left(\frac{x + 100}{2}, \frac{y + 100}{2}\right)
	\end{align*}
	Then $f_1$ can be thought of as taking $(x, y)$ to the point halfway between itself and the origin. Similarly, $f_2$ takes $(x, y)$ halfway to $(0, 100)$, and $f_3$ takes $(x, y)$ halfway to $(100, 100)$. The result of the random iteration algorithm on the IFS $\left\{\R^2;\ f_1, f_2, f_3\right\}$ (where the probabilities are uniform) is a Sierpinski triangle, as seen in Figure \ref{fig:sierpinski1a}. On the right, we use probabilities $0.8$, $0.1$, and $0.1$ for $f_1$, $f_2$, and $f_3$ respectively, as seen in Figure \ref{fig:sierpinski1b}.
\end{example}

\begin{figure}[h]
	\centering
	\begin{subfigure}[t]{0.4\textwidth}
		\includegraphics[width=\textwidth]{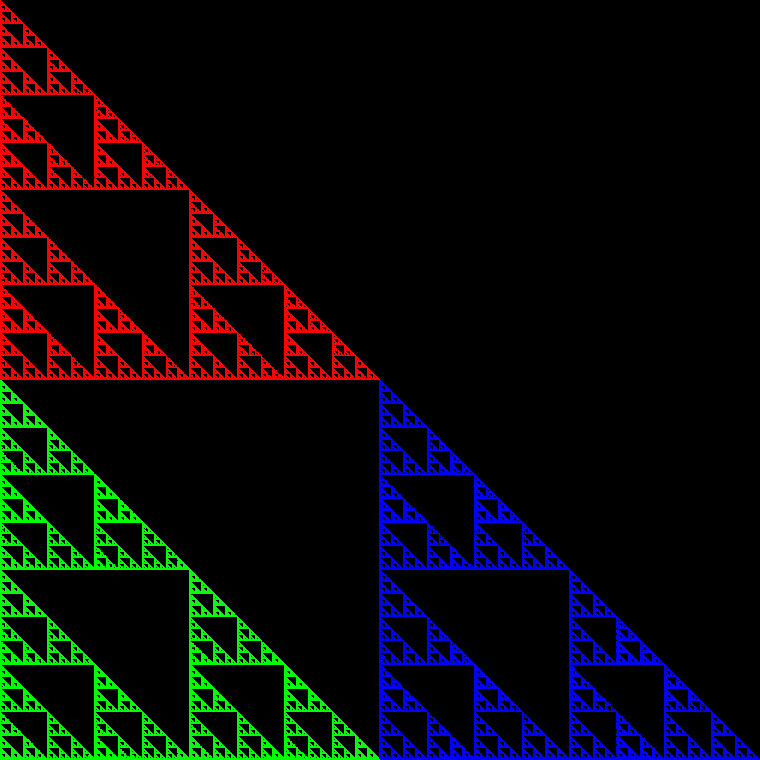}
		\subcaption{The result of one million iterations of random iteration algorithm on the IFS $\left\{\R^2;\ f_1, f_2, f_3;\ \frac{1}{3}, \frac{1}{3}, \frac{1}{3}\right\}$ from Example \ref{ex:sierpinski1} is the Sierpinski triangle, with vertices at $(0, 0)$, $(0, 100)$, and $(100, 100)$.}
		\label{fig:sierpinski1a}
	\end{subfigure}
	\hspace{0.2in}
	\begin{subfigure}[t]{0.4\textwidth}
		\includegraphics[width=\textwidth]{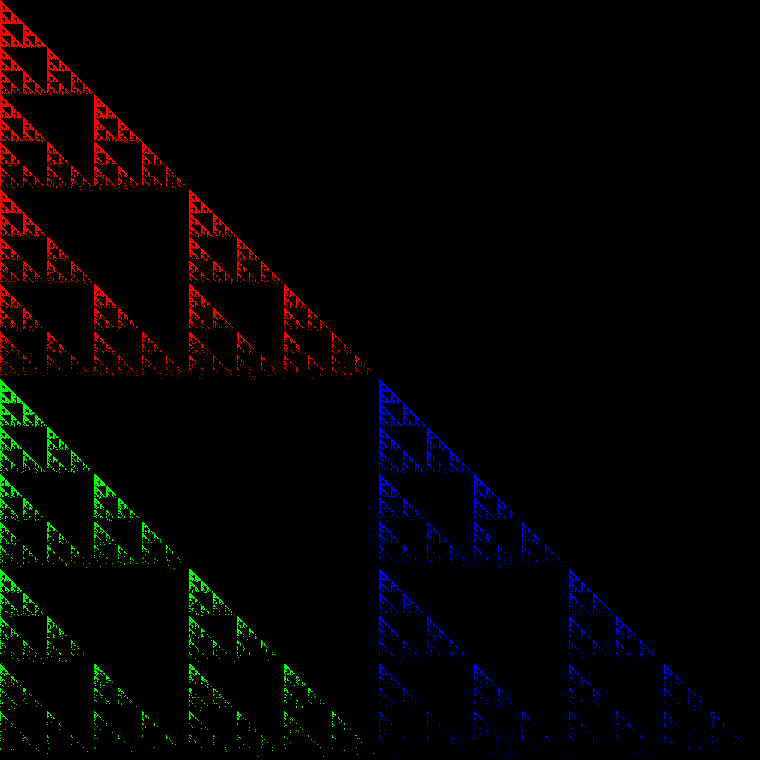}
		\subcaption{The result of one million iterations of the random iteration algorithm on the same IFS as in (a), except with probabilities $0.8$, $0.1$, and $0.1$ for $f_1$, $f_2$, and $f_3$, respectively.}
		\label{fig:sierpinski1b}
	\end{subfigure}
	\caption{Two results of the random iteration algorithm with the same transformations but different probabilities. In each picture, a color is associated to each transformation, so that $f_i(x, y)$ is given the color associated with $f_i$.}
	\label{fig:sierpinski1}
\end{figure}

\subsection{Randomness and Iterated Function Systems}
We modify the random iteration algorithm to instead use a pre-determined sequence to choose from the $n$ transformations at each step.

\begin{definition}
	Let $\{X;\ f_0, f_1, \dotsc, f_{n-1}\}$ be an IFS. Let $\sigma \in n^{\omega}$. The \textit{determined iteration algorithm} is the following modification of the random iteration algorithm. Pick $x_0 \in X$ arbitrarily as in the random algorithm, and pick $x_{k} = f_{\sigma[k - 1]}(x_{k-1})$ for each $k \geq 1$. The result of the determined iteration algorithm is $\{x_k : k \in \N\}$.
\end{definition}

\begin{example}\label{ex:unitSquare1}
	Let $v_0 = (0, 0), v_1 = (0, 1), v_2 = (1, 0), v_3 = (1, 1) \in \R^2$, and consider the IFS $\{\R^2, f_0, f_1, f_2, f_3\}$, where each $f_i$ is the midpoint transformation from $(x, y)$ to the point halfway between $(x, y)$ and $v_i$. The attractor of this IFS is the unit square, and when the probability of each $f_i$ is $p_i = \frac{1}{4}$, the square is uniformly covered with points when the random iteration algorithm is applied, as in Figure \ref{fig:unitSquareRandom1}. Champernowne's base 4 sequence produces the result in Figure \ref{fig:unitSquareChampernowne1}. Because the first 15 digits of $C_4$ are
	\begin{equation*}
		012310111213202
	\end{equation*}
	the first 15 transformations chosen in the determined iteration algorithm are, in order,
	\begin{equation*}
		f_0, f_1, f_2, f_3, f_1, f_0, f_1, f_1, f_1, f_2, f_1, f_3, f_2, f_0, f_2
	\end{equation*}
	
	\begin{figure}[h]
		\centering
		\begin{subfigure}[t]{0.28\textwidth}
			\includegraphics[width=\textwidth]{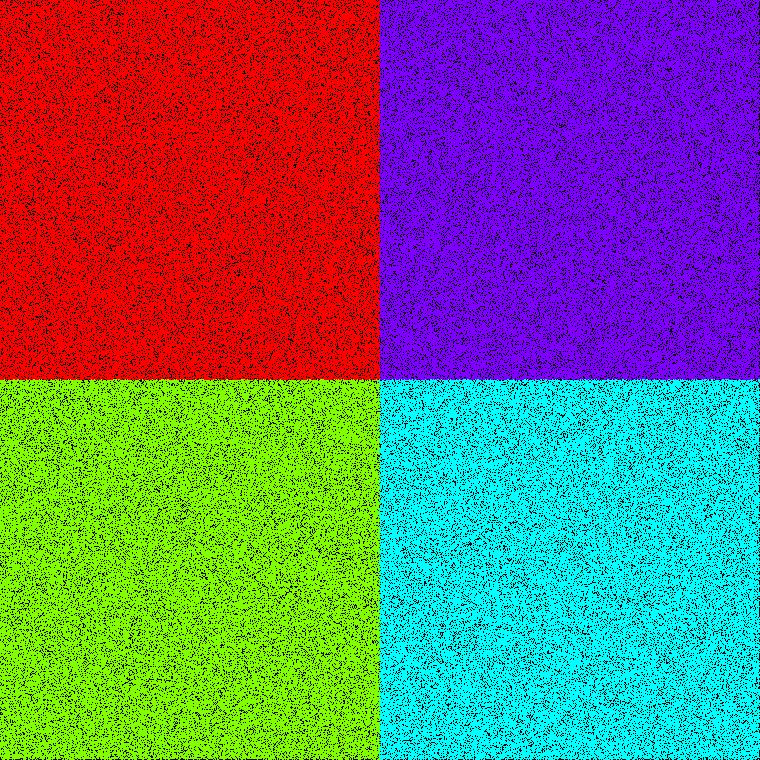}
			\subcaption{A result of one million iterations of the random iteration algorithm on the IFS $\{\R^2, f_0, f_1, f_2, f_3\}$ from Example \ref{ex:unitSquare1} using a pseudo-random number generator.}
			\label{fig:unitSquareRandom1}
		\end{subfigure}
		\hspace{0.02\textwidth}
		\begin{subfigure}[t]{0.28\textwidth}
			\includegraphics[width=\textwidth]{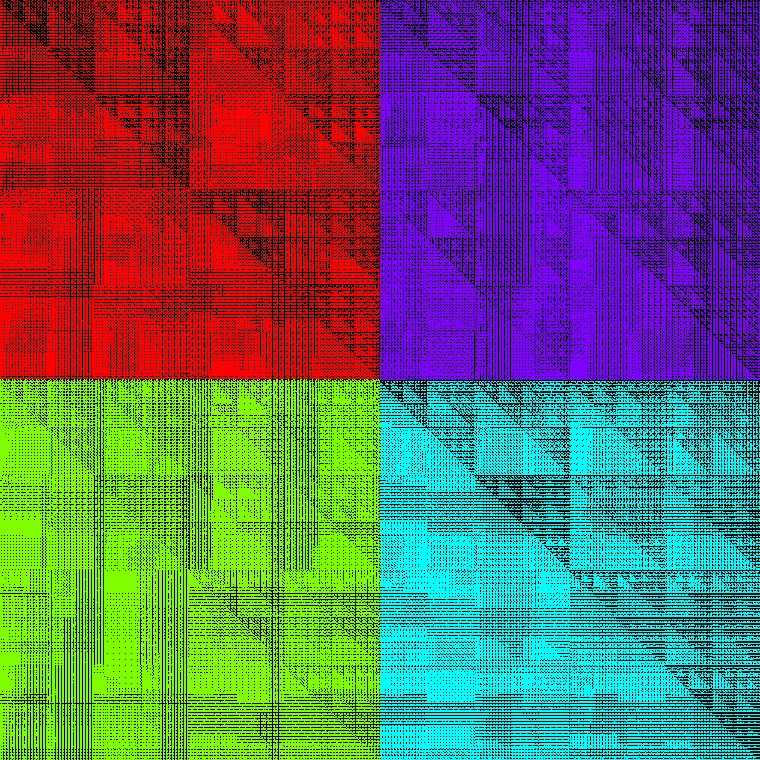}
			\subcaption{The result of one million iterations of the determined iteration algorithm on the same IFS as in (a). The transformations were determined by $C_4$.}
			\label{fig:unitSquareChampernowne1}
		\end{subfigure}
		\hspace{0.02\textwidth}
		\begin{subfigure}[t]{0.28\textwidth}
			\includegraphics[width=\textwidth]{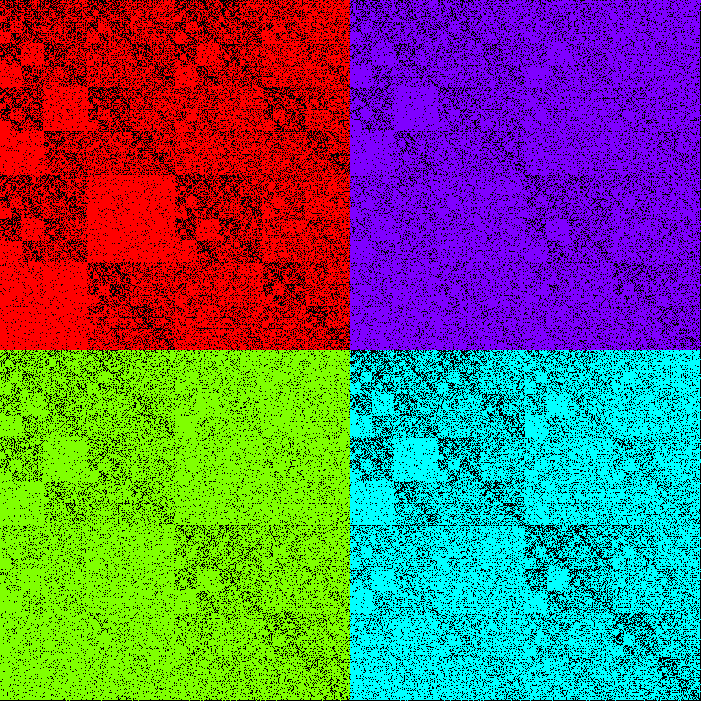}
			\subcaption{The result of one million iterations of the determined iteration algorithm on the same IFS as in (a). The transformations were determined by $CE_{4}$.}
			\label{fig:unitSquareCE1}
		\end{subfigure}
		\caption{Comparing the random iteration algorithm with the determined iteration algorithm.}
		\label{fig:unitSquare1}
		
	\end{figure}
\end{example}

By the definition of \textit{normal}, each transformation has the same chance of being applied to $x_n$ as every other transformation. Not all iterated function systems use uniform probabilities, however. Barnsley's fern, for example, uses four affine transformations with probabilities $0.85$, $0.07$, $0.07$, and $0.01$. This motivates the definition and construction of biased normal sequences.

\section{Further Questions}\label{sec:questions}

\begin{enumerate}[(1)]
	\item Let $\mu$ be a Borel probability measure, $x \in [0, 1]$ a real number, and $b$ a base. For each positive integer $n$ and interval $I \subseteq [0, 1]$, let
	\begin{equation*}
		f_I(n, x) = \card{\{k \in \Z : \text{$1 \leq k \leq n$ and there exists $y \in I$ such that $b^k x \equiv y\ \text{mod}\ 1$}\}}.
	\end{equation*}
	Say that $x$ is \textit{$\mu$-normal} if for every interval $I \subseteq [0, 1]$,
	\begin{equation*}
		\lim_{n \to \infty} \frac{f_I(n, x)}{n} = \mu(I).
	\end{equation*}
	
	What are the necessary and sufficient conditions on $\mu$ such that every $\mu$-Martin-L\"of-random real $x$ is $\mu$-normal?
	
	\item One can consider the set of bases to which a given real number is normal, and conversely one can ask whether there exists a real number which is normal to a set of bases. Similar questions can be asked in the biased case. For example, suppose $x$ is a Bernoulli random real in base $b$. For every base $b'$ multiplicatively independent of $b$, do there exist densities to which $(x)_{b'}$ is biased normal? If not, give a counterexample. For published progress on this question for the case of uniform biases, see \cite{bugeaud}. Preliminary investigations suggest that the assumption of Bernoulli randomness cannot be weakened to biased normality, since it appears that there exist reals which are biased normal for all bases multiplicatively independent of $b = 3$ but not biased simply normal in base $3$.
	
	\item Do biased normal reals compute normal reals? If so, does this algorithm also compute a $\lambda$-Martin-L\"of random real given Bernoulli random real? In \cite{porter}, Porter states that von Neumann's randomness extractor achieves the desired result for binary sequences.
	
	\begin{conjecture}
		There is a generalization of von Neumann's randomness extractor which computes normal reals from biased normal reals and $\lambda$-Martin-L\"of random reals from Bernoulli random reals.
	\end{conjecture}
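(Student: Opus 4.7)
The plan is to propose a concrete generalization of the von Neumann extractor and to verify both halves of the conjecture by essentially parallel arguments. Fix rational biases $\bar{p} = (p_0, \ldots, p_{b-1})$ and a window length $k$ (the simplest nontrivial choice being $k = 2$). Read the input $x \in b^\omega$ in consecutive non-overlapping windows of length $k$. A window $w \in b^k$ of multiset type $\vec{m} = (m_0, \ldots, m_{b-1})$ is one of $N_{\vec{m}} := \binom{k}{m_0, \ldots, m_{b-1}}$ orderings, all equiprobable under $\mu_{\bar{p}}$; after fixing for each $\vec{m}$ an enumeration $\pi_0^{\vec{m}}, \ldots, \pi_{N_{\vec{m}} - 1}^{\vec{m}}$ of these orderings, let $K_{\vec{m}} := \lfloor \log_2 N_{\vec{m}} \rfloor$, and output the $K_{\vec{m}}$-bit binary representation of $j$ if the observed window is $\pi_j^{\vec{m}}$ with $j < 2^{K_{\vec{m}}}$, and output nothing otherwise. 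Let $\Phi : b^\omega \to 2^\omega$ denote the resulting partial computable map, and let $L(w)$ be the number of output bits produced by window $w$; for $b = k = 2$ this recovers von Neumann's original extractor exactly.

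For the randomness half, first verify measure-preservation: a direct combinatorial computation shows that for every $\tau \in 2^{<\omega}$ the $\mu_{\bar{p}}$-measure of $\Phi^{-1}(\dbracket{\tau})$ equals $2^{-\len(\tau)}$, since within each type class the rejection step preserves the uniform distribution on orderings. Because $\Phi$ is computable and total on a set of full $\mu_{\bar{p}}$-measure (via Theorem \ref{thm:bernoullinormal}, any Bernoulli random input produces infinitely many informative windows), the preimage $(\Phi^{-1}(\mathcal{U}_i))_i$ of any $\lambda$-Martin-L\"of test is a $\mu_{\bar{p}}$-Martin-L\"of test, so Bernoulli random inputs yield $\lambda$-Martin-L\"of random outputs. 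For the normality half, by Corollary \ref{cor:equivdefsbiased} it suffices to show that every $\tau \in 2^\ell$ appears with asymptotic frequency $2^{-\ell}$ in $\Phi(x)$ whenever $x$ is biased normal to $\bar{p}$. Split the occurrences of $\tau$ in the output produced by the first $M$ input windows into those contained within a single window's output and those straddling a boundary between consecutive windows. The within-window count is a linear combination of window-type frequencies $\occ(x[0 : Mk - 1], w)/M$ for $w \in b^k$, which converge to $\mu_{\bar{p}}(w)$ by Corollary \ref{cor:equivdefsbiased}, while the boundary-straddling count involves $O(1)$-many adjacent pairs per window and can be absorbed by passing to longer windows of length $2k, 3k, \ldots$ and reapplying biased normality at those block lengths.

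The main obstacle is the variable output length per window: the correspondence between output and input positions is not a fixed linear bijection, so the frequency of a given $\tau$ in the output cannot be read directly off the frequency of any single input block. This forces a separate law-of-large-numbers step showing that the total output length produced by the first $M$ windows concentrates around $M \cdot \sum_{w \in b^k} \mu_{\bar{p}}(w) L(w)$, so that normalizing by output length rather than input window count does not distort the asymptotics; this concentration is again a consequence of biased normality at $b^k$-blocks. An alternative route that I would try first is a general preservation lemma: any computable, measure-preserving, window-based map from $(b^\omega, \mu_{\bar{p}})$ to $(2^\omega, \lambda)$ should take biased normal sequences to normal sequences, in which case both halves of the conjecture would follow uniformly from the measure-preservation calculation. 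Failing the clean abstract statement, the concrete block-frequency bookkeeping above should still suffice.
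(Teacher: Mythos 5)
The paper does not prove this statement --- it is listed as an open conjecture in Section \ref{sec:questions}, so there is no proof of the author's to compare your attempt against. What follows is an assessment of your proposal on its own terms.

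The extractor you define (type-based multinomial von Neumann extraction: within each window type all orderings are $\mu_{\bar{p}}$-equiprobable, so output the index of the observed ordering modulo a power-of-two rejection) is the natural generalization, it specializes correctly to von Neumann at $b = k = 2$, and it is defined without any reference to the biases, so your opening restriction to rational $\bar{p}$ is unnecessary and can be dropped. The measure-preservation claim is correct: conditional on the sequence of per-window output lengths, the output bits are i.i.d.\ uniform, so $\mu_{\bar{p}}(\Phi^{-1}(\dbracket{\tau})) = 2^{-\len(\tau)}$. The randomness half then does follow by the standard randomness-conservation argument (preimages of a $\lambda$-Martin-L\"of test form a $\mu_{\bar{p}}$-Martin-L\"of test, and $\Phi$ is total on $\mu_{\bar{p}}$-randoms), though you should make explicit that $\Phi^{-1}(\dbracket{\tau})$ is uniformly effectively open, which holds because membership is decided by a finite prefix of the input.

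The normality half is where the real work lives and where your sketch leaves a genuine gap. If you fix the window length at $k$ and count only within-window occurrences of $\tau$, the resulting density is
\begin{equation*}
\frac{\sum_{w \in b^k} \mu_{\bar{p}}(w)\,\occ(\mathrm{out}(w), \tau)}{\sum_{w \in b^k} \mu_{\bar{p}}(w)\, L(w)} = \frac{2^{-\ell}\, \mathbb{E}\bigl[(L - \ell + 1)^+\bigr]}{\mathbb{E}[L]},
\end{equation*}
which is strictly less than $2^{-\ell}$ whenever $\Pr[L \geq \ell] > 0$ and $\ell \geq 2$; the missing mass is exactly the boundary-straddling occurrences, which are therefore \emph{not} negligible for fixed $k$. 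Your suggestion to ``pass to longer windows of length $jk$'' is the right fix, but it is not merely absorbing an error term: you must carry out a genuine double-limit argument in which, for each fixed $j$, biased normality at block length $jk$ gives a density within $O(\ell/(j\bar{L}))$ of $2^{-\ell}$, and then $j \to \infty$ closes the gap. This in turn needs the explicit identity that, conditional on total output length $n$, the chunk output is uniform on $2^n$, so that the expected within-chunk count is $2^{-\ell}(n - \ell + 1)$ --- a fact you use implicitly but never state. Your ``alternative route'' via a general preservation lemma for window-based measure-preserving maps is plausible and would be cleaner, but it is not an off-the-shelf result and would itself require essentially the same bookkeeping, so it should not be presented as a shortcut. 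In short: the construction and the randomness half are sound, the normality half has the right skeleton, but as written it asserts a balance between straddlers and the within-window deficit without proving it, and that balance is the entire content of the lemma.
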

	
	\item What are the necessary and sufficient conditions for a real number, using the determined iteration algorithm, to generate the same attractor as the random iteration algorithm? This question can be formalized using the results presented by Barnsley in \cite{barnsley}.
	
	Assume that $(X, d)$ is a compact metric space and $\{X; w_0, w_1, \dotsc, w_n; p_0, p_1, \dotsc, p_{n-1}\}$ is a hyperbolic IFS with probabilities. By Theorems 9.6.1 and 9.6.2 of \cite{barnsley}, there is a unique normalized Borel measure $\nu$ on $X$ associated with the IFS such that the support of $\nu$ is the attractor of the IFS. The measure $\nu$ is called the \textit{invariant measure} associated with the IFS. If $\{x_k : k \in \N\}$ is the result of the determined iteration algorithm using $\sigma$, then let
	\begin{equation*}
		\mathcal{N}(B, n) = \card{\{x_0, x_1, \dotsc, x_n\} \cap B}
	\end{equation*}
	for any Borel subset $B$ of $X$. Let $S$ be the set of sequences $\sigma$ in $n^{\omega}$ which, under the determined iteration algorithm, will satisfy
	\begin{equation*}
		\nu(B) = \lim_{n \to \infty} \frac{\mathcal{N}(B, n)}{n + 1}
	\end{equation*}
	for every Borel subset $B$ of $X$ with measure $0$ boundary. By Corollary 9.7.1 of \cite{barnsley}, if the parameters of the Bernoulli measure are the probabilities $p_0, p_1, \dotsc, p_{n-1}$ from the IFS, then $S$ has Bernoulli measure $1$.
	
	In summary, $S$ is the set of sequences $\sigma \in n^{\omega}$ such that in the determined iteration algorithm using $\sigma$, each Borel subset $B$ with null boundary is visited with the frequency given by $\nu(B)$. What randomness properties must $\sigma$ have such that $\sigma \in S$? One can further ask if there a connection between the discrepancy of $\sigma$ and the rate at which the determined iteration algorithm approximates the attractor produced by the random iteration algorithm.
	
	\begin{conjecture}
		Given a hyperbolic IFS with probabilities, a sequence $\sigma$ is an element of $S$ --- that is, $\sigma$ generates the attractor of the IFS as described above --- if and only if $\sigma$ is biased normal with respect to the probabilities of the IFS.
	\end{conjecture}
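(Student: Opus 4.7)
The plan is to reduce both directions of the biconditional to the block-frequency characterization of biased normality in Corollary \ref{cor:equivdefsbiased}(3), by identifying visits of the orbit $\{x_k\}$ to natural Borel cells of the attractor with occurrences of reversed symbol blocks in $\sigma$. For a block $v = v_0 v_1 \cdots v_{\ell - 1} \in n^{\ell}$, write $\bar v = v_{\ell - 1} \cdots v_1 v_0$ for its reversal and let $A_v = w_{v_0} \circ w_{v_1} \circ \cdots \circ w_{v_{\ell - 1}}(A)$ be the corresponding cylinder of the attractor. Iterating the IFS fixed-point equation for the invariant measure gives $\nu(A_v) = p_{v_0} p_{v_1} \cdots p_{v_{\ell - 1}} = \mu_{\bar p}(v) = \mu_{\bar p}(\bar v)$. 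Unwinding the recursion $x_k = w_{\sigma[k - 1]} \circ w_{\sigma[k - 2]} \circ \cdots \circ w_{\sigma[0]}(x_0)$, one sees that whenever $\sigma[k - \ell : k - 1] = \bar v$ the point $x_k$ lies in $w_{v_0} \circ \cdots \circ w_{v_{\ell - 1}}(X)$, a set whose diameter is at most $s^{\ell}$ times the diameter of $X$ (where $s < 1$ bounds the contraction ratios) and which therefore lies in any fixed neighborhood of $A_v$ once $\ell$ is large.

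For the direction ``$\sigma$ biased normal $\Rightarrow \sigma \in S$'', assume $\sigma$ is biased normal with respect to $\bar p$. By Corollary \ref{cor:equivdefsbiased}, for every $\ell$ and every $v \in n^{\ell}$ the reversal $\bar v$ occurs in $\sigma$ with limiting frequency $\mu_{\bar p}(\bar v) = \nu(A_v)$, so the proportion of indices $k \leq N$ with $x_k$ in a fixed neighborhood of $A_v$ tends to $\nu(A_v)$. Given a Borel set $B$ with $\nu(\partial B) = 0$, fix $\varepsilon > 0$, choose $\ell$ large enough that $s^{\ell}$ times the diameter of $X$ is smaller than the thickness of an open neighborhood of $\partial B$ of $\nu$-measure at most $\varepsilon$, and sandwich $B$ between the union of those level-$\ell$ cells $A_v$ contained in $B$ and those intersecting $B$. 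A Portmanteau-type argument then upgrades the cell-level frequencies to $\lim_{N \to \infty} \mathcal{N}(B, N)/(N + 1) = \nu(B)$, so $\sigma \in S$.

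For the converse, assume $\sigma \in S$ and that the IFS satisfies a separation condition guaranteeing $\nu(\partial A_v) = 0$ for all $v$ (the open set condition is the natural setting of Corollary 9.7.1 of \cite{barnsley}). Applying the defining property of $S$ with $B = A_v$ gives that the empirical visit frequency of the orbit to $A_v$ equals $\nu(A_v) = \mu_{\bar p}(v)$. A refinement argument, passing to finer cells $A_{vu}$ for $u \in n^m$ with $m \to \infty$ to absorb the $s^{\ell + m}$-error between ``visits to $A_v$'' and ``matches of $\bar v$ in $\sigma$'', shows that $\bar v$ appears in $\sigma$ with frequency $\mu_{\bar p}(\bar v) = \mu_{\bar p}(v)$. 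Since $v$ was arbitrary, Corollary \ref{cor:equivdefsbiased} yields that $\sigma$ is biased normal.

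The chief obstacle is overlap. When the branches $w_i(A)$ intersect non-trivially, a single orbit point can lie in $A_v$ even though $\sigma[k - \ell : k - 1] \neq \bar v$, so the correspondence between cell visits and reversed blocks is only approximate, and without additional separation the converse direction may genuinely fail --- one might need to weaken the statement or to work with cells indexed by the IFS branch actually traversed rather than by geometric location. A secondary subtlety is the coupled double limit in $\ell$ (cell refinement) and $N$ (time averaging): biased normality supplies block frequencies in the limit $N \to \infty$ for each fixed $\ell$, while the geometric approximation needs $\ell$ large, and these must be balanced uniformly along the lines of the proof of Lemma \ref{lem:biasednormalityalt1}.
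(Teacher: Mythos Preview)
The paper does not prove this statement: it is explicitly labeled a \emph{Conjecture} and appears in Section~\ref{sec:questions} (``Further Questions'') as an open problem, with no proof or proof sketch offered. So there is nothing in the paper to compare your argument against; you are attempting to settle something the author left open.

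On the merits of your outline: the forward direction (biased normal $\Rightarrow \sigma \in S$) is the more tractable half, and your reduction to block frequencies via Corollary~\ref{cor:equivdefsbiased}(3) together with a Portmanteau sandwich over level-$\ell$ cylinder cells $A_v$ is the natural line of attack. The double limit you flag is real but manageable, essentially as in the proof of Lemma~\ref{lem:biasednormalityalt1}: fix $\ell$ large enough for the geometric $\varepsilon$-approximation, then send $N \to \infty$ using biased normality at that fixed $\ell$.

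The converse, however, is where the conjecture as stated is genuinely in trouble, and your own ``chief obstacle'' paragraph is pointing at a real counterexample rather than a mere technicality. Take $n = 2$, $X = [0,1]$, and $w_0 = w_1$ both equal to the map $x \mapsto x/2$, with $p_0 = p_1 = \tfrac{1}{2}$. The attractor is $\{0\}$, $\nu$ is the point mass at $0$, and \emph{every} $\sigma \in 2^{\omega}$ lies in $S$, including $\sigma = 000\cdots$, which is certainly not biased normal to $(\tfrac{1}{2},\tfrac{1}{2})$. Less degenerate overlapping examples behave similarly: whenever distinct symbol blocks $\bar v \neq \bar v'$ can land the orbit in the same cell $A_v = A_{v'}$, membership in $S$ cannot see the difference, so it cannot force the individual block frequencies required by biased normality. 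Your instinct to add a separation hypothesis (open set condition, or at least that the maps $w_i$ have disjoint images on $A$ up to $\nu$-null sets) is therefore not a convenience but a necessary correction to the conjecture itself. Under such a hypothesis your refinement argument for the converse is plausible, but you should state the amended conjecture explicitly rather than smuggle the hypothesis into the proof.
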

\end{enumerate}

\section{Acknowledgments}
This honors thesis was advised by Professor Theodore Slaman. I am grateful for Professor Slaman's time, guidance, and patience. His patience in helping me develop the proof of Lemma \ref{lem:biasednormalityalt1} is particularly noteworthy.

Conversations with Druv Pai about the binomial distribution and probability were helpful in developing the proofs of Lemmas \ref{lem:multinom} and \ref{lem:badblocks}.

For their support of the undergraduate mathematics community at UC Berkeley, I dedicate this senior thesis to Berkeley's Mathematics Undergraduate Student Association.

\bibliographystyle{unsrt}
\bibliography{ref}

\end{document}